\theoremstyle{plain}
\newtheorem{theorem}{Theorem}[section]
\newtheorem{lemma}[theorem]{Lemma}
\newtheorem{proposition}[theorem]{Proposition}
\newtheorem{corollary}[theorem]{Corollary}
\newtheorem{conjecture}[theorem]{Conjecture}
\theoremstyle{remark}
\newtheorem{remark}[theorem]{Remark}
\theoremstyle{definition}
\newtheorem{example}[theorem]{Example}
\newtheorem{definition}[theorem]{Definition}
\newtheorem*{questions}{Questions}
\numberwithin{equation}{section}
\DeclareMathOperator{\Id}{Id}
\DeclareMathOperator{\id}{id}
\DeclareMathOperator{\ch}{char}
\DeclareMathOperator{\End}{End}
\DeclareMathOperator{\Aut}{Aut}
\DeclareMathOperator{\sign}{sign}
\DeclareMathOperator{\supp}{supp}
\DeclareMathOperator{\Alt}{Alt}
\DeclareMathOperator{\PIexp}{PIexp}
\begin{document}

\title[Actions of Ore extensions]{Actions of Ore extensions
and growth of polynomial $H$-identities}

\author{A.\,S.~Gordienko}
\address{Vrije Universiteit Brussel, Belgium}
\email{alexey.gordienko@vub.ac.be} 

\keywords{Associative algebra, polynomial identity, codimension, $H$-module algebra, $H$-simple algebra, Taft algebra, Ore extension, skew-derivation.}

\begin{abstract} 
We show that if $A$ is a finite dimensional associative $H$-module algebra for an arbitrary Hopf algebra $H$, then the proof of the analog of Amitsur's conjecture for $H$-codimensions of $A$ can be reduced to the case when $A$ is $H$-simple. (Here we do not require that the Jacobson radical of $A$ is an $H$-submodule.)
 As an application, we prove that if $A$ is a finite dimensional associative $H$-module algebra where $H$ is a Hopf algebra $H$ over a field of characteristic $0$ such that $H$ is constructed by an iterated Ore extension of a finite dimensional semisimple Hopf algebra by skew-primitive elements (e.g. $H$ is a Taft algebra), then there exists integer $\PIexp^H(A)$. In order to prove this, we study the structure of algebras simple with respect to an action of an Ore extension.
\end{abstract}

\subjclass[2010]{Primary 16R10; Secondary 16R50, 16T05, 16W25.}

\thanks{Supported by Fonds Wetenschappelijk Onderzoek~--- Vlaanderen post doctoral fellowship (Belgium).}

\maketitle

\section{Introduction}

Numerical sequences attached to an object are used in many areas of algebra. One of such sequences is the sequence of codimensions $c_n(A)$ of polynomial identities of an algebra $A$. By the definition, $c_n(A) := \dim \frac{P_n}{P_n\cap \Id(A)}$ where $P_n$ is the vector space of multilinear
polynomials in the non-commuting variables $x_1,\ldots, x_n$ and $\Id(A)$ is the set of polynomial identities of $A$, i.e. such polynomials that vanish under all evaluations in $A$. 

Codimensions arise naturally when one calculates a basis for polynomial identities of an algebra over a field of characteristic $0$. In addition, codimesions were used by A.~Regev~\cite{RegevTensor} to show that the tensor product of PI-algebras (i.e. algebras satisfying a non-trivial polynomial identity) is again a PI-algebra.

In 1999 A.~Giambruno and M.\,V.~Zaicev~\cite{GiaZai99} proved the following conjecture:
\begin{conjecture}[S.\,A.~Amitsur]
For every associative PI-algebra $A$ over a field $F$ of characteristic $0$ there exists $$\PIexp(A):=\lim_{n\to\infty}\sqrt[n]{c_n(A)}\in\mathbb Z_+.$$
\end{conjecture}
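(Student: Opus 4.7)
The plan is to follow the Giambruno-Zaicev route: reduce to a finite-dimensional model, then combine the Wedderburn-Malcev structure with $S_n$-representation theory applied to multilinear polynomials. First, by a theorem of Kemer every associative PI-algebra over a field of characteristic zero satisfies the same T-ideal of identities as some finite-dimensional algebra, so we may assume $A$ itself is finite-dimensional without changing $c_n(A)$.

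Second, write $A = B \oplus J$ by Wedderburn-Malcev, with $B = B_1 \oplus \cdots \oplus B_k$ a maximal semisimple subalgebra ($B_i$ simple of dimension $d_i$) and $J = J(A)$ the nilpotent Jacobson radical. As the candidate for $\PIexp(A)$, take
\[
d := \max\bigl\{\dim(B_{i_1} \oplus \cdots \oplus B_{i_r}) : i_1,\dots,i_r \text{ distinct and } B_{i_1} J B_{i_2} J \cdots J B_{i_r} \neq 0\bigr\},
\]
where for $r=1$ the condition reduces to $B_{i_1} \neq 0$, so $d \geq \max_i d_i$. The target is then the two-sided estimate $C_1 n^{-s_1} d^n \leq c_n(A) \leq C_2 n^{s_2} d^n$, from which $\sqrt[n]{c_n(A)} \to d \in \mathbb{Z}_+$ follows at once.

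For the lower bound, fix a configuration realising $d$ together with elements $w_1,\dots,w_{r-1} \in J$ for which $B_{i_1} w_1 B_{i_2} w_2 \cdots w_{r-1} B_{i_r} \neq 0$, and construct an explicit multilinear polynomial alternating in $r$ groups of variables, each group substituted by a basis of the corresponding $B_{i_j}$ and interleaved with the $w_j$. Applying Regev's technique and the hook-length formula to Young diagrams whose columns have heights $d_{i_1},\dots,d_{i_r}$ produces a non-identity and yields a lower bound of the shape $C_1 n^{-s_1} d^n$.

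The upper bound is the genuine obstacle. One expands every multilinear monomial in a Shirshov-type basis of $A$ and uses that, since $J$ is nilpotent, any non-identity on $A$ can involve at most $\height(J)-1$ factors from $J$, a number independent of $n$. Character theory of $S_n$ then confines the partitions $\lambda \vdash n$ contributing non-trivially to the cocharacter of $A$ to a region essentially described by $\sum_i \lambda_i \log d_i \leq n \log d$. This nilpotency-driven bound on the number of radical ``slots'' is precisely what separates $d$ from the naive bound $\dim A$ and demands the most delicate combinatorial and representation-theoretic work. Combining both estimates yields the required convergence, finishing the proof.
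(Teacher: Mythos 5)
This statement is not proved in the paper at all: it is quoted as Amitsur's conjecture and attributed to Giambruno and Zaicev \cite{GiaZai99}; the paper's own Theorem~\ref{TheoremHmodHRadAmitsur} (with $H=F$) recovers only the finite-dimensional case. Your outline follows the Giambruno--Zaicev strategy, but it contains two genuine gaps. First, the opening reduction is false as stated: it is \emph{not} true that every PI-algebra in characteristic $0$ is PI-equivalent to a finite-dimensional algebra. Kemer's representability theorem gives this only for affine (finitely generated) PI-algebras; in general one only obtains PI-equivalence to the Grassmann envelope $G(B)$ of a finite-dimensional superalgebra $B$. The infinite-dimensional Grassmann algebra $E$ is the standard obstruction: it satisfies no Capelli identity, hence $\Id(E)\ne\Id(C)$ for every finite-dimensional algebra $C$, yet $\PIexp(E)=2$. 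So for non-affine $A$ your whole finite-dimensional model --- the definition of $d$, the lower bound and the upper bound --- must be replaced by its $\mathbb Z_2$-graded analogue for $G(B)$, which is a substantial extra layer of the actual proof, not a cosmetic change.

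Second, your lower bound construction is too weak. A multilinear non-identity alternating in only $r$ sets of sizes $d_{i_1},\dots,d_{i_r}$, with $r$ fixed, does not certify that the $n$th cocharacter contains any partition with a large rectangle, and the hook-length estimate then yields nothing of order $d^n$. What is needed is a non-identity of degree $n$ alternating in roughly $n/(2d)$ \emph{disjoint} sets, \emph{each of full cardinality} $d=\dim(B_{i_1}\oplus\dots\oplus B_{i_r})$, so that the cocharacter contains some $\lambda$ whose diagram contains a $d\times 2k$ rectangle with $2kd$ close to $n$. Producing such multialternating non-identities is the hard core of the argument --- it is exactly what Property~(*) and Lemma~\ref{LemmaAssocLowerPolynomial} encapsulate in this paper, via the density theorem and nondegenerate trace forms, or via Razmyslov-type central polynomials --- and your sketch offers no mechanism for it. The upper bound paragraph is likewise only a gesture: the statement actually proved is that $m_\lambda\ne 0$ forces $\lambda_{d+1}$ (and the number of rows) to be bounded by constants depending on the nilpotency index of $J$ and the number of simple components, after which $\sum_\lambda m_\lambda\deg\chi_\lambda\leqslant C_2 n^{s_2}d^n$; the inequality $\sum_i\lambda_i\log d_i\leqslant n\log d$ you write down is not the condition one establishes.
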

 Probably, the most exciting thing in their proof was that they provided (at least in the case when $A$ is finite dimensional and $F$ is algebraically closed)
the formula for $\PIexp(A)$ involving the Jacobson radical $J(A)$ of $A$ and simple components of $A/J(A)$.
In 2002 M.\,V.~Zaicev~\cite{ZaiLie} proved the analog of Amitsur's conjecture for finite dimensional
Lie algebras. The formula for the PI-exponent of Lie algebras was more complicated, than in the associative case, and involved annihilators of irreducible factors of the adjoint representation of the Lie algebras.

The results mentioned above suggest a further investigation of the connection between the structure of an algebra and the asymptotic behaviour of its codimensions, especially in the case when the algebra is endowed with an additional structure, e.g. a grading, an action of a group $G$ by automorphisms and anti-automorphisms, an action of a Lie algebra by derivations, since in this case it is reasonable to consider, respectively, graded, $G$- or differential identities~\cite{BahtGiaZai, BahtDrensky, Kharchenko}.
The case of a Hopf algebra action is of special interest because of the recent developments in the theory of algebras with Hopf algebra actions~\cite{CuadraEtingofWalton, EtingofWalton}. 

In order to study all these cases simultaneously, it is useful to
consider generalized $H$-actions (see the definition in Section~\ref{SectionHModGen}). Probably, the first who
used such actions and studied polynomial $H$-identities was Allan Berele~\cite[remark before Theorem~15]{BereleHopf} in 1996 (see also~\cite{BahturinLinchenko}).

Denote by $(c_n^H(A))_{n=1}^{\infty}$ the sequence of codimensions of polynomial $H$-identities of an
algebra $A$ with a generalized $H$-action. The following informal questions arise naturally:

\begin{questions}
Under which conditions on $H$-action holds
the analog of the Amitsur's conjecture,
i.e. there still exists an integer $$\PIexp^H(A):=\lim_{n\to\infty}\sqrt[n]{c_n(A)}\quad ?$$  What do we need from the structure of $A$ ?
\end{questions}

Note that the analog of the Amitsur's conjecture itself becomes here in some sense a tool for studying and understanding $H$-actions.

Example~\ref{ExampleInfCodim} below shows that in the case when $A$ and $H$ are both infinite dimensional the codimensions $c_n^H(A)$ could be infinite. Therefore it is reasonable to reduce our consideration to the case when either $A$ or $H$ is finite dimensional.

In the case of associative PI-algebras graded by a finite group,
the analog of Amitsur's conjecture
for codimensions of graded identities was proved in 2010--2011 by
E.~Aljadeff,  A.~Giambruno, and D.~La~Mattina~\cite{AljaGia, AljaGiaLa, GiaLa}. Their proof of the lower bound was based on the classification of finite dimensional graded-simple algebras. 

In 2013 the author proved~\cite{ASGordienko8} the analog of Amitsur's conjecture for finite dimensional associative algebras $A$ with a generalized $H$-action, where $H$ is an associative algebra with $1$, such that the Jacobson radical $J(A)$ is an $H$-submodule and $A/J(A)$ is a direct sum of $H$-simple algebras. That led to the proof of the analog of Amitsur's conjecture for finite dimensional associative algebras graded by any group
not necessarily finite~\cite{ASGordienko9}. 
In 2015 Yaakov Karasik~\cite{Karasik} proved the existence of integer $\PIexp^H(A)$ for (not necessarily
finite dimensional) $H$-module PI-algebras $A$ for finite dimensional semisimple Hopf algebras $H$.

In 2014 the author~\cite{ASGordienko13} (see also~\cite{GordienkoJanssensJespers}) found the first counterexamples to the analog of Amitsur's conjecture for $A$ in the case when $A$ is a module algebra for a finite dimensional bialgebra $H$. 

Despite the counterexamples mentioned above, we still believe that the analog of Amitsur's conjecture 
is true in the following form which belongs to Yu.\,A.~Bahturin:

\begin{conjecture}\label{ConjectureAmitsurBahturin} Let $A$ be a finite dimensional associative $H$-module algebra
for a Hopf algebra $H$ over a field of characteristic $0$. Then there exists
an integer $\PIexp^H(A):=\lim\limits_{n\to\infty}
 \sqrt[n]{c^H_n(A)}$.
\end{conjecture}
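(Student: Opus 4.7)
The plan is to establish matching upper and lower bounds on $c_n^H(A)$ of the form $C_1 n^{r_1} d^{n} \leqslant c_n^H(A) \leqslant C_2 n^{r_2} d^{n}$ for some non-negative integer $d$ determined by the structure of $A$, following the general Giambruno--Zaicev pattern. For this one first fixes (by ordinary Wedderburn--Mal'cev) a decomposition $A = B \oplus J(A)$, where $B = B_1 \oplus \cdots \oplus B_s$ is a direct sum of simple ideals, and lets $d$ be the maximum of $\dim(B_{i_1} \oplus \cdots \oplus B_{i_q})$ taken over tuples of distinct simple components that are ``connected'' both by the radical ($B_{i_1} J B_{i_2} J \cdots J B_{i_q} \neq 0$) and by the $H$-action. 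The point is that the exponent must be detectable only after one passes to an $H$-invariant picture, which is the job of the paper's reduction theorem.

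For the upper bound I would work with the $S_n$-representation on $P_n^H/(P_n^H \cap \Id^H(A))$, using that if $p$ is the nilpotency index of $J(A)$ then modulo $\Id^H(A)$ one can always restrict to multilinear polynomials with at most $p-1$ substitutions into $J(A)$. A Young-diagram height argument, combined with the ordinary (not $H$-invariant) Wedderburn--Mal'cev decomposition and careful bookkeeping of how the $H$-action moves the $B_i$ around, should produce an upper bound of the shape $\operatorname{poly}(n)\, d^n$. This is the direction that the earlier work \cite{ASGordienko8, ASGordienko9} already handled when $J(A)$ is $H$-stable, and the paper extends it by avoiding that hypothesis.

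For the lower bound I would invoke the paper's reduction to the $H$-simple case: once $A$ is replaced by its ``$H$-simple building blocks'', the lower bound reduces to producing, for each admissible tuple of $H$-simple components $B_{i_1}, \ldots, B_{i_q}$ and each large $n$, a multilinear $H$-polynomial alternating on many disjoint sets that survives evaluation on $A$. Here Razmyslov's density trick, as used in \cite{ASGordienko3} and \cite{GiaSheZai}, supplies central-type $H$-polynomials on each $H$-simple factor, and these are then glued through the radical bridges connecting the chosen components. The resulting polynomial realises the full alternation of $d$ elements per layer and gives the desired $d^n$ lower bound up to a polynomial factor.

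The main obstacle, and the place where all the novelty sits, is the reduction step itself: without $H$-stability of $J(A)$ one cannot directly split off an $H$-invariant semisimple replacement for $B$, so Razmyslov's central polynomial construction does not immediately yield an $H$-invariant witness. I would try to overcome this by replacing $H$ with an enlarged (possibly infinite-dimensional) operator algebra that makes the image of the $H$-action on $A$ stable on some chosen complement of $J(A)$, and then show that $H$-codimensions are insensitive to this enlargement. A secondary obstacle is that, even after the reduction, Conjecture~\ref{ConjectureAmitsurBahturin} for a general Hopf algebra $H$ requires information about finite-dimensional $H$-simple algebras that is not presently available; hence a fully unconditional proof has to be carried out case by case, as the paper does for the Taft algebras $H_{m^2}(\zeta)$ via their explicit structure.
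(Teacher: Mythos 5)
The statement you are asked to prove is labelled a \emph{conjecture} in the paper, and the paper does not prove it: it only (i) reduces it to Property~(*) for the $H$-simple components of $A/J^H(A)$ (Theorem~\ref{TheoremHmodHRadAmitsur}) and (ii) verifies that hypothesis for the Taft algebras $H_{m^2}(\zeta)$ (Theorem~\ref{TheoremTaftAmitsur}). Your proposal concedes exactly this in its final sentence, so it is not a proof and cannot be assessed as one: for a general Hopf algebra $H$ the required input about finite dimensional $H$-simple algebras (that their $H$-PI-exponents are integers equal to their dimensions) is not currently available, and the counterexamples cited in the paper for bialgebra and semigroup actions show that some hypothesis of this kind is genuinely needed. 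This is the primary gap.

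Within the reduction step that you do sketch, there are two concrete divergences from what actually works. First, your $d$ is built from the ordinary simple components of $A/J(A)$ ``connected by the radical and by the $H$-action''; the correct quantity is built from the $H$-simple components $B_i$ of $A/J^H(A)$ (which need not be semisimple in the ordinary sense, and $J^H(A)$ may be strictly smaller than $J(A)$) together with the connecting condition $(H\varkappa(B_{i_1}))A^+\cdots(H\varkappa(B_{i_r}))\ne 0$ of~(\ref{EqdAssoc}). Second, your device for coping with a non-$H$-stable radical --- enlarging $H$ to an operator algebra that stabilises a chosen complement of $J(A)$ and arguing that $H$-codimensions are insensitive to the enlargement --- is not the paper's mechanism and is unlikely to work: enlarging the acting algebra enlarges $P_n^H$ and genuinely changes the $H$-identities (only shrinking $H$ to its image in $\End_F(A)$ is harmless), the generalized-action axiom~(\ref{EqGeneralizedHopf}) must be preserved for every new operator, and passing to a larger operator algebra can only \emph{decrease} the supply of invariant subspaces (under all of $\End_F(A)$ the only invariant ones are $0$ and $A$). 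The paper's actual substitute is Lemma~\ref{LemmaWeakWedderburnMalcevHRad}: one works with $J^H(A)$, which is automatically $H$-invariant, and constructs a purely linear section $\varkappa\colon A/J^H(A)\hookrightarrow A$ that is a homomorphism of $(B,B)$-bimodules for a maximal semisimple subalgebra $B$; the key point of Lemma~\ref{LemmaAssocLowerPolynomial} is that, thanks to the maximality choice leading to~(\ref{EqAssocbazero}), $\varkappa$ behaves like an $H$-module map in the relevant slots even though it is not one. Without this (or an equivalent substitute) the gluing of the alternating polynomials through the radical bridges in your lower-bound step does not go through.
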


As we have mentioned above, the proofs in all previous papers~\cite{AljaGia, AljaGiaLa, GiaLa, ASGordienko8, ASGordienko11, ASGordienko12, Karasik} worked only in the case when the Jacobson radical $J(A)$ was an $H$-submodule or $A$ was $H$-simple itself.
In the current article we do not assume that the Jacobson radical of $A$ is $H$-invariant,
replacing the Wedderburn~--- Mal'cev theorem by its weak analog (Lemma~\ref{LemmaWeakWedderburnMalcevHRad}) which still makes it possible to transfer the computations to $H$-simple algebras. In Theorem~\ref{TheoremHmodHRadAmitsurPIexpHBdimB} we show that Conjecture~\ref{ConjectureAmitsurBahturin}
can be reduced to the existence of an $H$-polynomial which is a polynomial non-identity
and has sufficiently many alternations. More precisely, we prove that if $H$ is a unital associative algebra, $A$ is a finite dimensional associative algebra with a generalized $H$-action, $J^H(A)$ is its maximal nilpotent $H$-invariant ideal, and $A/J^H(A)$ is a direct sum of $H$-simple algebras satisfying Property (*) (see Section~\ref{SectionPropertyStar}), then there exists integer $\PIexp^H(A)$. In other words, the analog of Amitsur's conjecture is a consequence of Property (*) and the $H$-invariant analog of the Wedderburn~--- Artin Theorem.

 The $H$-invariant analog of the Wedderburn~--- Artin Theorem holds for all finite dimensional $H$-module algebras for any Hopf algebra~$H$ \cite[Theorem 1.1]{SkryabinHdecomp}, \cite[Lemma 4.2]{SkryabinVanOystaeyen} (see also Lemma~\ref{LemmaHSemiSimpleIsUnital} below), however there exist finite dimensional algebras with a generalized $H$-action, namely semigroup graded algebras, where it does not hold~\cite[Example~4]{ASGordienko13}.
   
   Property (*) implies that the exponents of growth of $H$-identities of the corresponding $H$-simple algebras are integer and equal their dimensions. Property (*) holds for all finite dimensional semisimple (in ordinary sense) $H$-simple algebras~\cite[Theorem~7]{ASGordienko3} (see also the remark in Example~\ref{ExampleHSemiSimplePropertyStar} below) and for all finite dimensional algebras simple with respect to an action of a Taft algebra $H_{m^2}(\zeta)$~\cite[Lemma~7]{ASGordienko12}. However, there exist finite dimensional algebras
   with a generalized $H$-action, which again are semigroup graded algebras, where the $H$-PI-exponent is non-integer and Property (*) does not hold~\cite{ASGordienko13}. (See also~\cite{GordienkoJanssensJespers}.)

As an application of Theorem~\ref{TheoremHmodHRadAmitsurPIexpHBdimB}, in Corollary~\ref{CorollaryHOreAmitsur} we show that Conjecture~\ref{ConjectureAmitsurBahturin} holds for the class $\mathcal C$ of all Hopf algebras $H$ that are constructed by an iterated Ore extension of a finite dimensional semisimple Hopf algebra $\tilde H$ by skew-primitive elements (see the precise
definition of such algebras $H$ in Definition~\ref{DefinitionOreConstructed}), which is a crucial step for proving Conjecture~\ref{ConjectureAmitsurBahturin} for an arbitrary pointed Hopf algebra.

The class $\mathcal C$ is rather large. It includes Taft algebras $H_{m^2}(\zeta)$ as well as Hopf algebras $H(C,n,c,c^*,a,b)$ (see~\cite[Definition 5.6.15]{Danara}) that were used to answer in the negative Kaplansky's conjecture on the finiteness of the isomorphism
types of Hopf algebras of a given finite dimension~\cite{AndrusSchneider, BDG1, BDG2}. Furthermore, the class $\mathcal C$ contains non-pointed Hopf algebras,
e.g. two Hopf algebras of dimension $16$ (see~\cite[Theorem~5.1]{CalDascMasMen}).

Finite dimensional $H_{m^2}(\zeta)$-module algebras that contain no nonzero nilpotent elements were classified in~\cite[Theorem~2.5]{MontgomerySchneider}. Exact module categories over the category $\mathrm{Rep}(H_{m^2}(\zeta))$ were studied in~\cite[Theorem~4.10]{EtingofOstrik}.
Finite dimensional $H_{m^2}(\zeta)$-simple $H_{m^2}(\zeta)$-module algebras were classified in~\cite{ASGordienko11, ASGordienko12}.
$H_{m^2}(\zeta)$-actions on path algebras of quivers were studied in~\cite{KinserWalton}.

In Theorem~\ref{TheoremHSimpleOreExtStructure} we show that for every $H$-simple $H$-module algebra $A$, where $H\in \mathcal C$,
the $\tilde H$-module algebra $A/J^{\tilde H}(A)$ is $\tilde H$-simple and if $J^{\tilde H}(A)\ne 0$,
then $A=\bigoplus_{k=0}^{m-1} v^k\tilde J$ for some minimal $\tilde H$-invariant ideal 
$\tilde J \subseteq J^{\tilde H}(A)$ and the restriction of the natural surjective
homomorphism $A \twoheadrightarrow A/J^{\tilde H}(A)$ on $v^{m-1}\tilde J$ is an $F$-linear bijection
$v^{m-1}\tilde J \mathrel{\widetilde{\to}} A/J^{\tilde H}(A)$.
This enables to show in Theorem~\ref{TheoremHSimpleOreExtStructure} that Property (*) for $\tilde H$-simple algebras implies Property (*) for $H$-simple algebras, which we use in Corollary~\ref{CorollaryHOreAmitsur}.

Imposing two additional restrictions which both hold in all the examples of Hopf algebras $H$ above, namely, \begin{enumerate}\item $\varphi(g)=\zeta g$ where $g$ is defined by $\Delta v = g \otimes v + v \otimes 1$
and $\zeta$ is a $t$th primitive root of unity, $t\in\mathbb N$,
\item  $v^t \tilde J \subseteq \tilde J$ (e.g. $v^t \in \tilde H$),
\end{enumerate}
 we prove in Theorem~\ref{TheoremHSimpleOreExtMul}
the formula for the multiplication in $A$ involving the multiplication in $A/J^{\tilde H}(A)$.

 \section{$H$-module algebras and their generalizations}\label{SectionHModAndGen}
 \subsection{$H$-module algebras}\label{SectionHmoduleAlgebras}

 An algebra $A$
over a field $F$
is a \textit{(left) $H$-module algebra}
for some Hopf algebra $H$
if $A$ is endowed with a structure of a (left) $H$-module such that
$h(ab)=(h_{(1)}a)(h_{(2)}b)$
for all $h \in H$, $a,b \in A$. Here we use Sweedler's notation
$\Delta h = h_{(1)} \otimes h_{(2)}$ where $\Delta$ is the comultiplication
in $H$ and the sign of the sum is omitted.
(Note that we do not require from $A$ to be unital.)

We say that an $H$-module algebra $A$ is a \textit{unital $H$-module algebra}
if there exists $1_A$ and, in addition, $h 1_A = \varepsilon(h) 1_A$ for all $h\in H$.

We refer the reader to~\cite{Danara, Montgomery, Sweedler}
   for an account
  of Hopf algebras and algebras with Hopf algebra actions.

\subsection{Algebras with a generalized $H$-action}\label{SectionHModGen}
  
In the most of the constructions related to polynomial
identities themselves the restrictions in the definition of an $H$-module algebra are unnecessary, and we can consider a more general notion
of an algebra with a generalized $H$-action.

  Let $H$ be an arbitrary associative algebra with $1$ over a field $F$.
We say that an algebra $A$ is an algebra with a \textit{generalized $H$-action}
if $A$ is endowed with a homomorphism $H \to \End_F(A)$
and for every $h \in H$ there exist some $k\in \mathbb N$ and some $h'_i, h''_i, h'''_i, h''''_i \in H$, $1\leqslant i \leqslant k$,
such that 
\begin{equation}\label{EqGeneralizedHopf}
h(ab)=\sum_{i=1}^k\bigl((h'_i a)(h''_i b) + (h'''_i b)(h''''_i a)\bigr) \text{ for all } a,b \in A.
\end{equation}

Equivalently, there exist linear maps $\Delta, \Theta \colon H \to H\otimes H$ (not necessarily coassociative)
such that 
$$ h(ab)=\sum\bigl((h_{(1)} a)(h_{(2)} b) + (h_{[1]} b)(h_{[2]} a)\bigr) \text{ for all } a,b \in A.$$ (Here we use the notation $\Delta(h)= \sum h_{(1)} \otimes h_{(2)}$ and $\Theta(h)= \sum  h_{[1]} \otimes h_{[2]}$.)

\begin{example}\label{ExampleHmodule} If $A$ is an $H$-module algebra,
then $A$ is an algebra with a generalized $H$-action.
\end{example}

\begin{example}\label{ExampleIdFT}
Recall that if $T$ is a semigroup, then the \textit{semigroup algebra} $FT$ over a field is the vector space with the formal basis $(t)_{t\in T}$ and the multiplication induced by the one in $T$.
Let $A$ be an associative algebra with an action 
of a semigroup $T$ by endomorphisms and anti-endomorphisms. Then $A$ is an algebra with
 a generalized $FT$-action.
\end{example}

\begin{example}\label{ExampleIdGr}
Let $A=\bigoplus_{t\in T} A^{(t)}$ be a \textit{graded} algebra for some set of indices $T$, i.e. for every $s,t \in T$ there exists $r\in T$
such that $A^{(s)}A^{(t)}\subseteq A^{(r)}$. Denote this grading by $\Gamma$. Note that $\Gamma$ defines on $T$
a partial operation $\star$ with the domain $T_0:=\lbrace (s,t) \mid A^{(s)}A^{(t)} \ne 0 \rbrace$ by $s\star t = r$.
Consider the algebra $F^T$ of functions from $T$ to $F$.
Then $F^T$ acts on $A$ naturally: $ha = h(t)a$ for all $a\in A^{(t)}$.
Let $h_t(s):=\left\lbrace\begin{smallmatrix} 1 & \text{if} & s=t,\\ 0 & \text{if} & s\ne t.\end{smallmatrix} \right.$
If the \textit{support} $$\supp \Gamma := \lbrace t\in T \mid A^{(t)}\ne 0\rbrace$$ of $\Gamma$ is finite, $T_0$ is finite too and we have
\begin{equation}\label{EqIdentityHFiniteSupp}h_r(ab)=\sum_{\substack{(s,t)\in T_0,\\ r=s\star t}}
h_s(a)h_t(b). \end{equation}
(Since the expression is linear in $a$ and $b$, it is sufficient to check it only for homogeneous $a,b$.)
 Note that $(h_t)_{t\in T}$ is a basis in $F^T$. Again by the linearity
 we get~(\ref{EqGeneralizedHopf}) for every $h\in F^T$, and $A$ is an algebra
with a generalized $F^T$-action.
\end{example}

Let $A$ be an algebra with a generalized $H$-action for some associative algebra $H$ with $1$ over a field $F$. We say that a subspace $V\subseteq A$ is  \textit{invariant} under the $H$-action if $HV=V$, i.e. $V$ is an $H$-submodule. Denote by $J^H(A)$ the maximal nilpotent $H$-invariant two-sided ideal of $A$. We call $J^H(A)$ the \textit{$H$-radical} of $A$. If $A$ is Noetherian (e.g. Artinian or even finite-dimensional), then $J^H(A)$ always exists. If $A^2\ne 0$ and $A$ has no non-trivial two-sided $H$-invariant ideals, we say that $A$ is \textit{$H$-simple}.

\subsection{$H$-invariant Wedderburn~--- Artin theorem}

  In~\cite[Theorem 1.1]{SkryabinHdecomp}, \cite[Lemma 4.2]{SkryabinVanOystaeyen}
Sergey Skryabin and Freddy Van Oystaeyen proved
the following theorem.

\begin{theorem}[S.\,M.~Skryabin~--- F.~Van Oystaeyen]\label{TheoremSkryabinVanOystaeyen}
Let $A$ be a unital left $H$-module right Artinian associative algebra for some Hopf algebra over a field $F$
such that $J^H(A)=0$.
Then $A=B_1 \oplus \ldots \oplus B_q$ (direct sum of $H$-invariant ideals) for some $q\in \mathbb Z_+$ and some $H$-simple $H$-module algebras $B_i$.
\end{theorem}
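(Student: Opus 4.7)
My plan is to reduce the theorem to the semisimple Artinian case and then extract the $H$-simple decomposition by a combinatorial argument on the simple blocks of $A$.

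The decisive step is to show that the ordinary Jacobson radical $J(A)$ is $H$-invariant. Once this is granted, $J(A)$ is a nilpotent $H$-invariant two-sided ideal (by the right Artinian hypothesis), hence $J(A)\subseteq J^H(A)=0$, so $A$ is semisimple Artinian. To prove $H$-invariance of $J(A)$, I would work inside the smash product $A\#H$, regard $A$ as a subring, and use the comultiplication together with the antipode to argue that for every $h\in H$ the two-sided ideal of $A$ generated by $h\cdot J(A)$ is again nilpotent, so it lies in $J(A)$ by the maximality of the Jacobson radical among nilpotent ideals of an Artinian ring. This is the genuinely hard step, and the one that really uses that $H$ is a Hopf algebra rather than an algebra with a generalized coproduct; it is also consistent with the semigroup-graded counterexamples recalled in the introduction, where this $H$-invariance fails.

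Once $A$ is semisimple Artinian, the classical Wedderburn--Artin theorem gives $A=S_1\oplus\cdots\oplus S_N$, a direct sum of its minimal two-sided ideals with central primitive idempotents $e_1,\ldots,e_N$ summing to $1_A$. Every two-sided ideal of $A$ then has the form $\bigoplus_{i\in I}S_i$ for some $I\subseteq\{1,\ldots,N\}$, and the $H$-invariant ones correspond to subsets stable under the operator sending $I$ to the index set of the smallest $H$-invariant ideal containing $\bigoplus_{i\in I}S_i$. I would generate an equivalence relation on $\{1,\ldots,N\}$ by declaring $i\sim j$ whenever $S_j$ lies in the smallest $H$-invariant ideal containing $S_i$, using the antipode to verify that the symmetrized relation partitions the index set. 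Labelling the classes $I_1,\ldots,I_q$ and setting $B_\ell:=\bigoplus_{i\in I_\ell}S_i$ then yields the required decomposition: each $B_\ell$ is $H$-invariant by construction, $A=B_1\oplus\cdots\oplus B_q$ as a direct sum of $H$-invariant ideals, and $B_\ell$ is $H$-simple because any non-zero $H$-invariant ideal of $B_\ell$ equals $\bigoplus_{i\in I'}S_i$ for some non-empty $I'\subseteq I_\ell$, which $H$-invariance combined with the minimality of the $\sim$-class forces to coincide with $I_\ell$. The entire weight of the theorem therefore sits in the $H$-invariance of $J(A)$; everything downstream is essentially formal.
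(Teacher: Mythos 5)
There is a genuine gap, and it sits exactly where you place ``the entire weight of the theorem'': the claim that $J(A)$ is $H$-invariant, hence that $J^H(A)=0$ forces $A$ to be semisimple, is false in the stated generality. A concrete counterexample: let $F$ have characteristic $p>0$, let $A=F[x]/(x^p)$, and let $H=U(\mathfrak g)$ be the universal enveloping algebra of the one-dimensional Lie algebra $\mathfrak g=F\delta$ acting on $A$ by the derivation $\delta=d/dx$. Then $A$ is a unital $H$-module algebra, Artinian, and differentially simple (any nonzero differential ideal contains an element of lowest degree $k\leqslant p-1$, and applying $\delta^k$ to it produces a unit since $k!$ is invertible), so $J^H(A)=0$ and $A$ is itself $H$-simple; yet $J(A)=(x)\neq 0$ and $\delta$ does not preserve it, since $\delta(x)=1$. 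The theorem holds here with $q=1$ and $B_1=A$, but $B_1$ is not semisimple, so no reduction to the Wedderburn---Artin picture is possible. Your proposed mechanism for the invariance --- that the two-sided ideal generated by $h\cdot J(A)$ is nilpotent --- is also unjustified: $h$ acts through a Leibniz-type rule, not as an algebra endomorphism, so it need not carry nilpotent ideals into nilpotent sets, as the example shows. Even for finite-dimensional $H$ in characteristic $0$, the $H$-stability of $J(A)$ is a known hard problem settled only in special cases, and this paper is written precisely so as \emph{not} to assume it: Lemma~\ref{LemmaWeakWedderburnMalcevHRad} explicitly allows $J(A/J^H(A))\neq 0$, i.e.\ allows the $H$-simple summands to be non-semisimple.

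For comparison, the paper does not prove this statement at all; it quotes it from Skryabin and Skryabin---Van Oystaeyen, whose arguments work directly with $H$-semiprimeness (minimal $H$-invariant ideals and their annihilators) and never pass through ordinary semisimplicity. Your second stage --- grouping the Wedderburn blocks of a semisimple $A$ into minimal $H$-invariant ideals --- is a reasonable argument in the special case where $A$ happens to be semisimple, but it cannot carry the theorem, because the hypothesis $J^H(A)=0$ simply does not imply that $A$ is semisimple.
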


In order to apply Theorem~\ref{TheoremSkryabinVanOystaeyen}
without any assumptions on the existence of $1_A$, we make the following observation which is also of independent interest.

\begin{lemma}\label{LemmaHSemiSimpleIsUnital}
Let $A$ be a left $H$-module right Artinian associative algebra for some Hopf algebra over a field $F$
such that $J^H(A)=0$.
Then $A$ is a unital $H$-module algebra.
\end{lemma}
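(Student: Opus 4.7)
The plan is to reduce to the unital setting by adjoining a formal unit to $A$, apply Theorem~\ref{TheoremSkryabinVanOystaeyen} to the resulting unital $H$-module algebra, and then recover $A$ as a direct sum of some of the $H$-simple blocks, reading off $1_A$ from the corresponding Peirce decomposition.

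First I would form $A^1 := A \oplus F$ with the standard unitization product $(a,\lambda)(b,\mu) := (ab + \lambda b + \mu a, \lambda\mu)$ and extend the $H$-action by $h(a,\lambda) := (ha, \varepsilon(h)\lambda)$. A routine check using the counital identities $\sum \varepsilon(h_{(1)}) h_{(2)} = h = \sum h_{(1)} \varepsilon(h_{(2)})$ confirms that $A^1$ is a unital left $H$-module algebra containing $A$ as an $H$-invariant ideal of codimension one. Right Artinianness transfers from $A$ to $A^1$: any descending chain $I_1 \supseteq I_2 \supseteq \ldots$ of right ideals of $A^1$ intersects $A$ in a chain of right ideals which stabilizes, while the quotient dimensions $\dim I_k/(I_k \cap A) \leq \dim A^1/A = 1$ are non-increasing, so the original chain stabilizes as well.

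Next I would verify that $J^H(A^1) = 0$. Any $H$-invariant nilpotent two-sided ideal $I \subseteq A^1$ yields an $H$-invariant nilpotent ideal $I \cap A$ of $A$, which must vanish since $J^H(A) = 0$; the projection $A^1 \twoheadrightarrow A^1/A \cong F$ is then injective on $I$, and its image consists of nilpotent scalars in $F$, hence is zero. So $I = 0$. Theorem~\ref{TheoremSkryabinVanOystaeyen} now yields a decomposition $A^1 = B_1 \oplus \cdots \oplus B_q$ into $H$-invariant $H$-simple ideals. Writing $1_{A^1} = e_1 + \cdots + e_q$ with $e_i \in B_i$, each $e_i$ is the unit of $B_i$, and applying $h \in H$ to $1_{A^1}$ together with the $H$-invariance of the $B_i$ and $h 1_{A^1} = \varepsilon(h) 1_{A^1}$ gives $h e_i = \varepsilon(h) e_i$ componentwise.

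Since $A$ is an $H$-invariant ideal of $A^1$, each intersection $A \cap B_i$ is an $H$-invariant ideal of $B_i$, hence either $0$ or $B_i$. For $a \in A$ the decomposition $a = \sum_i a e_i$ has each $a e_i \in A \cap B_i$, so $A = \bigoplus_{i \in S} B_i$ for $S := \{i : A \cap B_i \ne 0\}$, and $1_A := \sum_{i \in S} e_i$ is the required $H$-invariant identity. The one delicate step is the proof that $J^H(A^1) = 0$: the essential observation is that, once $J^H(A) = 0$ is used, any $H$-invariant nilpotent ideal of $A^1$ is trapped in the one-dimensional quotient $F$, which contains no nonzero nilpotent. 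The rest of the argument is bookkeeping around the Peirce decomposition provided by Theorem~\ref{TheoremSkryabinVanOystaeyen}.
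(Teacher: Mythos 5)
Your proposal is correct and follows essentially the same route as the paper: adjoin a unit with $h1:=\varepsilon(h)1$, transfer right Artinianness, apply Theorem~\ref{TheoremSkryabinVanOystaeyen} to the unitization, and recover $A$ as a sum of $H$-simple blocks whose units are $H$-fixed up to $\varepsilon$. Your explicit verification that $J^H(A^+)=0$ (via $I\cap A\subseteq J^H(A)=0$ and the absence of nonzero nilpotents in the one-dimensional quotient) is a detail the paper leaves implicit, but it is the same argument.
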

\begin{proof}
Let $A^+ := A \oplus F1_{A^+}$ where $1_{A^+}$ is the adjoint unit of $A$.
Define $h 1_{A^+} := \varepsilon(h)1_{A^+}$ for all $h\in H$.
Then $A^+$ is a unital $H$-module algebra and $A$ is a two-sided $H$-invariant
ideal of $A^+$. Since $A^+/A$ and $A$ are right Artinian $A^+$-modules, $A^+$ is a right Artinian algebra
and we can apply Theorem~\ref{TheoremSkryabinVanOystaeyen}.
We get $A^+=B_1 \oplus \ldots \oplus B_q$ (direct sum of $H$-invariant ideals) for some $q\in \mathbb Z_+$ and some unital $H$-simple $H$-module algebras $B_i$.
Since $A$ is a two-sided $H$-invariant ideal of $A^+$
and each $AB_i$ equals either $0$ or $B_i$, we obtain that $A$ is the direct sum
of all but one $B_i$. Since all $B_i$ are unital and $h 1_{B_i} = \varepsilon(h)
1_{B_i}$ for $h\in H$, we get the proposition.
\end{proof}

As a consequence, if $A$ is a finite dimensional $H$-module associative algebra for a Hopf algebra $H$,
we always have $A/J^H(A) = B_1 \oplus B_2 \oplus \ldots
\oplus B_q$ (direct sum of $H$-invariant ideals) for some $H$-simple $H$-module algebras $B_i$.

\subsection{Weak Wedderburn~--- Mal'cev theorem for associative algebras with a generalized $H$-action}

A strong Wedderburn~--- Mal'cev theorem for associative algebras with a generalized $H$-action would say that for a finite dimensional associative algebra $A$ with a generalized $H$-action there exists
a homomorphism $\varkappa \colon A/J^H(A) \hookrightarrow A$ of algebras and $H$-modules such that $\pi\varkappa = \id_{A/J^H(A)}$
where $\pi \colon A \twoheadrightarrow A/J^H(A)$ is the natural surjective homomorphism. Unfortunately, this is not always true even for $H$-module algebras, see~\cite[Example~4]{ASGordienko8}.

Here we prove a weak form of the Wedderburn~--- Mal'cev theorem, namely,
that there exists an $F$-linear map $\varkappa \colon A/J^H(A) \hookrightarrow A$ such that $\pi\varkappa = \id_{A/J^H(A)}$ and $\varkappa$ is a $(B,B)$-bimodule homomorphism for a maximal semisimple subalgebra 
$B \subseteq A/J^H(A)$. Recall that by $J(A)$ we denote the ordinary Jacobson radical of an algebra $A$.

\begin{lemma}\label{LemmaWeakWedderburnMalcevHRad}
Let $A$ be a finite dimensional associative algebra with a generalized $H$-action for some associative algebra $H$ with $1$ over an algebraically closed field $F$. Denote by $\pi$ the natural surjective homomorphism $A \twoheadrightarrow A/J^H(A)$. There exists an $F$-linear embedding $\varkappa \colon A/J^H(A) \hookrightarrow A$ such that $\pi\varkappa=\id_{A/J^H(A)}$
and for some maximal semisimple (in the ordinary sense) subalgebra $B \subseteq A/J^H(A)$, where $A/J^H(A) = B \oplus J(A/J^H(A))$ (direct sum of subspaces), we have  $\varkappa(ba)=\varkappa(b)\varkappa(a)$, $\varkappa(ab)=\varkappa(a)\varkappa(b)$ for all $a\in A/J^H(A)$ and $b\in B$.
\end{lemma}
\begin{proof}
By the ordinary Wedderburn~--- Mal'cev theorem,
there exists a
maximal semisimple subalgebra $B_0 \subseteq A$ such that $A = B_0 \oplus J(A)$ (direct sum of subspaces).
Now we treat $A$ as a $(B_0, B_0)$-bimodule. We claim that $A$ is a direct sum of irreducible $(B_0, B_0)$-subbimodules.  

Since $B_0$ is a semisimple algebra over an algebraically closed field,
the algebra $B_0 \otimes B_0^\mathrm{op}$ is semisimple too,
and $A$ is a completely reducible
left $B_0 \otimes B_0^\mathrm{op}$-module where $B_0^\mathrm{op}$ is anti-isomorphic
to $B_0$ and $(b_1\otimes b_2)a :=b_1 a b_2$ for all $b_1 \otimes b_2 \in B_0 \otimes B_0^\mathrm{op}$
and $a\in A$. Since we do not require from $A$ to be unital, this implies only that
$1_{B_0} A\ 1_{B_0}$ is a completely reducible $(B_0, B_0)$-bimodule.

Consider the Pierce decomposition $$A=(1-1_{B_0})A(1-1_{B_0})\oplus 1_{B_0}A(1-1_{B_0})\oplus
(1-1_{B_0})A\ 1_{B_0}\oplus 1_{B_0} A\ 1_{B_0}$$ (direct sum of $(B_0, B_0)$-subbimodules)
where $1$ is the formal unity.
Here $1_{B_0}A(1-1_{B_0})$ is a completely reducible left $B_0$-module,
$(1-1_{B_0})A\ 1_{B_0}$ is a completely reducible right $B_0$-module,
and $(1-1_{B_0})A(1-1_{B_0})$ is a vector space with zero $(B_0, B_0)$-action.
Hence $A$ is a direct sum of irreducible $(B_0, B_0)$-bimodules.
Therefore, there exists a $(B_0, B_0)$-subbimodule $N\subseteq A$
such that $J(A)=N\oplus J^H(A)$. 
Note that $$\pi\bigr|_{(B_0\oplus N)} \colon (B_0\oplus N) \mathrel{\widetilde\to}
A/J^H(A)$$ is an $F$-linear bijection.
Define $\varkappa := \left(\pi\bigr|_{(B_0\oplus N)}\right)^{-1}$.
Let $B := \pi(B_0)$. 
Then $A/J^H(A) = \pi(B_0) \oplus \pi(J(A))=B \oplus J(A/J^H(A))$.

Let $a \in A/J^H(A)$ and $b\in B$. Then
$\pi(\varkappa(ab))=ab=\pi\varkappa(a)\pi\varkappa(b)=\pi(\varkappa(a)\varkappa(b))$.
Since both $\varkappa(ab), \varkappa(a)\varkappa(b) \in B_0\oplus N$,
we get $\varkappa(ab)=\varkappa(a)\varkappa(b)$.
Analogously, $\varkappa(ba)=\varkappa(b)\varkappa(a)$.
\end{proof}

\section{$H$-simple algebras and their $\tilde H$-invariant ideals}\label{SectionHSimpleOreExtStructure}

In this section we show that if $A$ is a finite dimensional $H$-simple $H$-module algebra
where $H$ is a Hopf algebra which is constructed by an Ore extension of a Hopf algebra $\tilde H$
by a skew-primitive element $v$, then $A/J^{\tilde H}(A)$ is $\tilde H$-simple and if 
$A/J^{\tilde H}(A) \ne 0$, then, applying $v$ several times, we can always
map a given nonzero element $a$ to an element which is nonzero modulo $J^{\tilde H}(A)$.
This will be used later in the proof of Theorem~\ref{TheoremHOrePropertyStar}.

We first recall the definition of an Ore extension.

\begin{definition}
Let $C$ be an associative algebra over a field $F$, let
 $\varphi \colon C\mathrel{\widetilde{\to}}C$
be an algebra automorphism, and
let $\delta \colon C \to C$ be a \textit{$\varphi$-skew-derivation}, i.e.
$\delta(ab)=\varphi(a)\delta(b)+\delta(a)b$ for all $a,b\in C$.
Then the \textit{Ore extension} $C[v,\varphi,\delta]$
consists of formal polynomials $\sum_{i=0}^n a_i v^n$, $a_i\in C$, $n\in\mathbb Z_+$,
where the multiplication is induced by that one in $A$ together with the relations
$va - \varphi(a) v = \delta(a)$, $a\in C$.
\end{definition}

Note that if $B$ is an algebra generated by an element $v\in B$ and a subalgebra $C \subseteq B$
and $\varphi \colon C\mathrel{\widetilde{\to}}C$
is an algebra automorphism such that $v a - \varphi(a)v \in C$ for all $a\in C$,
then the map $$\delta(a)=v a - \varphi(a)v,\ a\in C$$ is always a $\varphi$-skew-derivation on $C$.
In this case $B \cong C[v,\varphi,\delta]/I$ where $I$ is an ideal of $B$ such that $C \cap I = 0$.

\begin{definition}\label{DefinitionOreConstructed}
We say that $B$ is \textit{constructed by an Ore extension of $C$ by the element $v$}.
\end{definition}

Recall that an element $g$ of a Hopf algebra $H$ is \textit{group-like}
if $\Delta g = g\otimes g$ and $g\ne 0$. The set $G(H)$ of all group-like elements of $H$ is a group under the multiplication in $H$. An element $v\in H$ is \textit{skew-primitive}
if $\Delta v = g_1 \otimes v + v \otimes g_2$ for some 
$g_1, g_2 \in G(H)$.

\begin{lemma}\label{LemmaHOreRadical}
Let $H$ be a Hopf algebra over a field $F$ generated as an algebra by a subHopfalgebra $\tilde H$ and a skew-primitive element $v\in H$, $\Delta v = g_1 \otimes v + v \otimes g_2$, $g_1, g_2 \in G(\tilde H)$.
Let $A$ be a finite dimensional $H$-simple $H$-module algebra. Then either $A$ is $\tilde H$-simple
or $J^{\tilde H}(A)\ne 0$.
\end{lemma}
\begin{proof}
Suppose $J^{\tilde H}(A)= 0$.
By Theorem~\ref{TheoremSkryabinVanOystaeyen},
$A=N_1 \oplus \ldots \oplus N_s$ (direct sum of $\tilde H$-invariant ideals)
for some unital $\tilde H$-simple algebras $N_i$. 
We claim that all $N_i$ are $H$-invariant. It is sufficient to show that
$vN_i\subseteq N_i$ for all $1\leqslant i \leqslant s$.
Let $a\in N_i$ and $b\in N_j$, $i\ne j$.
Then $$(va)b=v(a (g_2^{-1}b))-(g_1a)(vg_2^{-1}b)=-(g_1 a)(vg_2^{-1}b)
\in N_i.$$ At the same time $b\in N_j$ and $(va)b \in N_j$.
Hence $(va)b=0$. Analogously, $b(va)=0$. Since $b\in N_j$, $j\ne i$, is arbitrary, we get $va \in N_i$. Hence all $N_i$ are $H$-invariant,
$s=1$, and $A$ is $\tilde H$-simple.
\end{proof}

\begin{theorem}\label{TheoremHSimpleOreExtStructure}
Let $H$ be a Hopf algebra over a field $F$ generated as an algebra by a subHopfalgebra $\tilde H$ and a skew-primitive element $v\in H$, $\Delta v = g_1 \otimes v + v \otimes g_2$, $g_1, g_2 \in G(\tilde H)$.
Also suppose that there exists an algebra automorphism $\varphi \colon \tilde H \mathrel{\widetilde{\to}} \tilde H$
such that $vh-\varphi(h)v\in \tilde H$ for all $h\in\tilde H$.
Let $A$ be a finite dimensional $H$-simple $H$-module algebra with $J^{\tilde H}(A)\ne 0$. 
Denote by $p$ the natural number such that 
$J^{\tilde H}(A)^p=0$ and $J^{\tilde H}(A)^{p-1}\ne 0$.
Let $\tilde J \subseteq J^{\tilde H}(A)^{p-1}$ be a minimal
two-sided $\tilde H$-invariant ideal.
Then there exists $m\in\mathbb N$
such that
 $$I_k := \bigoplus_{i=0}^{k-1} v^i \tilde J$$ are two-sided $\tilde H$-invariant ideals
 for $1 \leqslant k \leqslant m$,
$I_m = A$, $I_{m-1} = J^{\tilde H}(A)$.
Moreover, the map $v^k \tilde J \twoheadrightarrow v^{k+1}\tilde J$
defined by $a \mapsto va$ is an $F$-linear bijection
for all $0 \leqslant k \leqslant m-1$.
In addition, $A/J^{\tilde H}(A)$ is an $\tilde H$-simple algebra.
\end{theorem}

Before proving Theorem~\ref{TheoremHSimpleOreExtStructure},
we give several definitions.

Let $A$ be a (left) $H$-module algebra. We say that $M$ is a \textit{$(H,(A,A))$-bimodule}
if $M$ is a left $H$-module, an $(A,A)$-bimodule, and $h(am)=(h_{(1)}a)(h_{(2)}m)$,
and $h(ma)=(h_{(1)}m)(h_{(2)}a)$ for all $a\in A$ and $m\in M$.

Let $\psi \colon M_1 \to M_2$ be an $F$-linear map where $M_1$ and $M_2$
are $(H,(A,A))$-bimodules. Let $g_1, g_2 \in G(H)$
and let $\varphi$ be an algebra automorphism of $H$. We say that $\psi$ is a \textit{$(\varphi, g_1, g_2)$-homomorphism} if $\psi(am)=(g_1 a)\psi(m)$, $\psi(ma)=\psi(m) (g_2 a)$,
and $\psi(hm)=\varphi(h)\psi(m)$.
If $\psi$ is bijective, we say that $\psi$ is a
\textit{$(\varphi, g_1, g_2)$-isomorphism}.

\begin{proof}[Proof of Theorem~\ref{TheoremHSimpleOreExtStructure}.]
If $\tilde J$ is invariant under the action of $v$,
then $\tilde J$ is an $H$-invariant two-sided ideal of $A$ and we get $A=\tilde J$ since $A$ is $H$-simple. Now $\tilde J\subseteq J(A)$ implies $A^2=\tilde J^2 = 0$ which contradicts to the definition of an $H$-simple algebra.
Hence $v \tilde J \subsetneqq \tilde J$.

Let $I_k := \sum_{i=0}^{k-1} v^i \tilde J$, $k\geqslant 1$.
Since $hv a \in v\varphi^{-1}(h)a + \tilde H a$
for every $h\in \tilde H$ and $a\in A$,
each $I_k$ is $\tilde H$-invariant.

Moreover, since $v$ is skew-primitive,
we have $$b(va) = v ((g_1^{-1} b)a) - (v g_1^{-1} b)(g_2 a)$$
and $$(va)b = v(a(g_2^{-1}b)) - (g_1a)(vg_2^{-1}b)$$ for all $a,b \in A$.
Taking $a\in I_k$ we obtain by induction that all $I_k$ are two-sided ideals.

Since $A$ is finite-dimensional, there exists $m\in\mathbb N$
such that $I_k \subsetneqq I_{k+1}$ for all $1\leqslant k \leqslant m-1$
and $I_{m+1}=I_m$. Then $I_m$ is invariant under the action of $v$
and we have $A=I_m$ since $A$ is $H$-simple.

Let $I_0:=0$. Define the $F$-linear maps $\psi_k \colon I_k/I_{k-1} \twoheadrightarrow I_{k+1}/I_k$, $1 \leqslant k \leqslant m-1$, by $$\psi_k(a + I_{k-1}):= va+ I_k,\quad a\in I_k.$$

Note that $$\psi_k((a+I_{k-1})b)=v(ab)+ I_k=(g_1 a)(vb)+(v a)(g_2b)+ I_k =\psi_k(a+I_{k-1})(g_2b)$$
and $$\psi_k(b(a+I_{k-1}))= v(ba)+ I_k
=(g_1 b)(va)+(v b) (g_2a)+ I_k =
(g_1 b)\psi_k(a+I_{k-1})$$
for all $a\in I_k$, $b\in A$.

Moreover $$\psi_k(h(a+I_{k-1}))=vha + I_k=
\varphi(h) va +I_k
= \varphi(h)\psi_k(a+I_{k-1}).
$$
Therefore, $\psi_k$ is an $(\varphi,g_1,g_2)$-homomorphism.

Since $I_1=\tilde J$ is an irreducible $(\tilde H,(A,A))$-bimodule,
either $\psi_1$ is bijective or $I_2/I_1=0$. i.e. $m=1$.
In the first case $I_2/I_1$ is again an irreducible $(\tilde H,(A,A))$-bimodule.
Continuing this procedure we obtain that $I_k/I_{k-1}$, $1\leqslant k \leqslant m$, are irreducible $(\tilde H,(A,A))$-bimodules $(\varphi^{1-k},g_1^{1-k},g_2^{1-k})$-isomorphic to~$\tilde J$. Comparing their dimensions, we get
that the map $v^k \tilde J \twoheadrightarrow v^{k+1}\tilde J$
defined by $a \mapsto va$ must be an $F$-linear bijection
for all $0 \leqslant k \leqslant m-1$ and the sum in the definition of $I_k$ is direct

The $(\tilde H,(A,A))$-bimodule version
of the Jordan~--- H\"older theorem implies that in any composition
series of $(\tilde H,(A,A))$-bimodules in $A$
each factor is $(\tilde \varphi,\tilde g_1, \tilde g_2)$-isomorphic
to $\tilde J$ for appropriate $\tilde\varphi \in \Aut(\tilde H)$, $\tilde g_1, \tilde g_2\in G(\tilde H)$. 
By Theorem~\ref{TheoremSkryabinVanOystaeyen},
$A/J^{\tilde H}(A)=N_1 \oplus \ldots \oplus N_s$ (direct sum of $\tilde H$-invariant ideals)
for some $\tilde H$-simple algebras $N_i$. 
Suppose $s\geqslant 2$. Since $N_1$ and $N_2$ are irreducible factors
in an $(\tilde H,(A,A))$-bimodule composition series of $A$,
there exists a $(\tilde\varphi,\tilde g_1,\tilde g_2)$-isomorphism $\psi \colon N_1 \mathrel{\widetilde\to} N_2$ for some elements $\tilde g_1, \tilde g_2 \in G(\tilde H)$
and an algebra isomorphism $\varphi \colon \tilde H \mathrel{\widetilde\to} \tilde H$. 
Denote by $\bar a$ the image of $a\in A$ in $A/J^{\tilde H}(A)$.
Then for any $\bar a, \bar b\in N_2$ we
have $$\bar a \bar b = \bar a b=\psi(\psi^{-1}(\bar a)) b
=\psi(\psi^{-1}(\bar a)(\tilde g_2^{-1} b))
= \psi(\psi^{-1}(\bar a)(\tilde g_2^{-1} \bar b))
= 0$$ since $\psi^{-1}(\bar a)\in N_1$ and $\tilde g_2^{-1} \bar b \in N_2$.
Hence $N_2^2=0$ and we get a contradiction to the fact that $N_2$ is $\tilde H$-simple. Therefore $s=1$
and $A/J^{\tilde H}(A)$
is an $\tilde H$-simple algebra. In particular, $J^{\tilde H}(A)$ is a maximal $\tilde H$-invariant ideal.

We claim now that $J^{\tilde H}(A)$ is the unique maximal $\tilde H$-invariant ideal. Indeed, if $I \subseteq A$ is another $\tilde H$-invariant ideal
such that $I \subsetneqq J^{\tilde H}(A)$, we get $A=I+J^{\tilde H}(A)$
and $$A/(I\cap J^{\tilde H}(A)) \cong I/(I\cap J^{\tilde H}(A)) \oplus
J^{\tilde H}(A) /(I\cap J^{\tilde H}(A))$$ (direct sum of $\tilde H$-invariant ideals). Since by Lemma~\ref{LemmaHSemiSimpleIsUnital} the algebra $A$ is unital, $A/(I\cap J^{\tilde H}(A))$ and, therefore,
$J^{\tilde H}(A)/(I\cap J^{\tilde H}(A))$ must be unital too which contradicts the nilpotency of 
$J^{\tilde H}(A)$. Thus  $J^{\tilde H}(A)$ is indeed the unique maximal $\tilde H$-invariant ideal.

 Since  $A/I_{m-1}$ is $(\varphi^{1-m},g_1^{1-m},g_2^{1-m})$-isomorphic to~$\tilde J$
 and $\tilde J$ is an irreducible $(\tilde H,(A,A))$-bimodule, $A/I_{m-1}$ is an $\tilde H$-simple algebra.
 Thus
$I_{m-1}$ is a maximal $\tilde H$-invariant ideal too and $I_{m-1}=J^{\tilde H}(A)$.
\end{proof}

By Theorem~\ref{TheoremHSimpleOreExtStructure}, we can define an $F$-linear map $\psi 
\colon A \to A$ by the formulas $\psi(v^k a) := v^{k-1} a$
for $1\leqslant k \leqslant m-1$ and $a\in \tilde J$ and $\psi(\tilde J) :=0$.
Note that $\psi^m=0$ and $\psi(A)= J^{\tilde H}(A)$.

Now define $F$-linear maps $\psi_k \colon A/J^{\tilde H}(A)\to A$, $0\leqslant k \leqslant m-1$, as follows.
For every $\bar a \in \bar A := A/J^{\tilde H}(A)$
there exists unique $a\in v^{m-1} \tilde J$ such that $\bar a = a + J^{\tilde H}(A)$. Define 
$\psi_k(\bar a) := \psi^k(a)$, $0\leqslant k \leqslant m-1$.
Then we have $A=\bigoplus_{k=0}^{m-1}\psi_k(\bar A)$. In other words,
the algebra $A$ is built up of images of the algebra $\bar A$.
We will calculate products of $\psi_k(a)$ in the next section.
In the proposition below we study the properties of $\psi_{m-1}$ with respect to the $\tilde H$-action.

\begin{proposition}\label{PropositionHOreActionOnm1} Suppose we are under the conditions of Theorem~\ref{TheoremHSimpleOreExtStructure}.
Then
$h \psi_{m-1}(\bar a)= \psi_{m-1}(\varphi^{m-1}(h)\bar a)$ for all $\bar a\in \bar A$ and $\tilde h \in \tilde H$. 
\end{proposition}
\begin{proof} Since $\psi_{m-1}(\bar a) \in \tilde J$ and $\tilde J$ is an $\tilde H$-invariant ideal,
$h \psi_{m-1}(\bar a)= b $ for some $b\in \tilde J$.
Let $\delta(h):=vh-\varphi(h)v$, $h\in\tilde H$.
 Then 
\begin{equation*}\begin{split}v^{m-1} b = v^{m-1} h \psi_{m-1}(\bar a)=v^{m-2} \varphi(h) v \psi_{m-1}(\bar a)+v^{m-2} \delta(h)
\psi_{m-1}(\bar a) =\\ = \varphi^{m-1}(h) v^{m-1} \psi_{m-1}(\bar a)+ \sum_{i=0}^{m-2}  v^{m-i-2} \delta\left(\varphi^{i}(h)\right) v^i \psi_{m-1}(\bar a)=\\=
\varphi^{m-1}(h) a + \sum_{i=0}^{m-2}  v^{m-i-2} \delta\left(\varphi^{i}(h)\right) v^i \psi_{m-1}(\bar a).\end{split}\end{equation*}
By Theorem~\ref{TheoremHSimpleOreExtStructure}, the second term belongs to $J^{\tilde H}(A)$.
Hence $\pi(v^{m-1} b)= \varphi^{m-1}(h) \bar a$ where $\pi \colon A \twoheadrightarrow A/J^{\tilde H}(A)$
is the natural surjective homomorphism. Since $v^{m-1}b \in v^{m-1} \tilde J$, we have $\psi_0(\varphi^{m-1}(h) \bar a)=v^{m-1}b$ and $h \psi_{m-1}(\bar a)=b= \psi_{m-1}(\varphi^{m-1}(h)\bar a)$.
\end{proof}

\section{Multiplication in $H$-simple algebras}\label{SectionHSimpleOreExtMul}

In this section we show that, under additional assumptions, the multiplication in an $H$-simple
algebra $A$ is induced by that one in the $\tilde H$-simple algebra $A/J^{\tilde H}(A)$. 

In Theorem~\ref{TheoremHSimpleOreExtMul} below we use \textit{quantum binomial coefficients}: $$\binom{n}{k}_\zeta := \frac{n!_\zeta}{(n-k)!_\zeta\ k!_\zeta}$$ where $n!_\zeta := n_\zeta (n-1)_\zeta \cdot \dots \cdot 1_\zeta$ and
$n_\zeta := 1 + \zeta + \zeta^2 + \dots + \zeta^{n-1}$, $n\in\mathbb N$, $0_\zeta :=1$.

Let $H$ be a Hopf algebra over a field $F$ generated as an algebra by a subHopfalgebra $\tilde H$ and a skew-primitive element $v\in H$, $\Delta v = g \otimes v + v \otimes 1$, $g \in G(\tilde H)$.
Also suppose that there exists an algebra automorphism $\varphi \colon \tilde H \to \tilde H$
such that $vh-\varphi(h)v\in \tilde H$ for all $h\in\tilde H$,
$\varphi(g)=\zeta g$ for a primitive $t$th root of unity $\zeta$,
and $v^t \tilde J \subseteq \tilde J$,
for example, $v^t \in \tilde H$.
Let $A$ be a finite dimensional $H$-simple $H$-module algebra with $J^{\tilde H}(A)\ne 0$. 
Let $\tilde J \subseteq J^{\tilde H}(A)^{p-1}$ be a minimal
two-sided $\tilde H$-invariant ideal where $p$ is defined by $J^{\tilde H}(A)^p=0$ and $J^{\tilde H}(A)^{p-1}\ne 0$.
Denote $v^{-1} \tilde J:= 
\lbrace a\in A \mid va\in  \tilde J\rbrace$.

\begin{theorem}\label{TheoremHSimpleOreExtMul}
The number $t$ equals the number $m$ defined in Theorem~\ref{TheoremHSimpleOreExtStructure}, the subspace $B := v^{-1} \tilde J$ is a $g$-invariant subalgebra that coincides with $v^{m-1} \tilde J$ and we have a decomposition  $A=B\oplus J^{\tilde H}(A)$
(direct sum of subspaces), the map $\psi_0 \colon A / J^{\tilde H}(A) \mathrel{\widetilde\to}
B$ is an isomorphism of algebras,
and for every $0\leqslant k,\ell < m$ and $\bar a, \bar b \in A / J^{\tilde H}(A)$
we have \begin{equation}\label{EqQuantumCoefMulHSimpleOreExt}
\psi_k(\bar a)\psi_\ell(\bar b)=\tbinom{k+\ell}{k}_\zeta \psi_{k+\ell}
((g^\ell \bar a) \bar b).
\end{equation} (Here $\psi_k$ are the maps defined at the end of Section~\ref{SectionHSimpleOreExtStructure}.)
\end{theorem}

In order to prove Theorem~\ref{TheoremHSimpleOreExtMul}, we need several lemmas.

Note that $\psi(ga)=\zeta^{-1}g\psi(a)$ for all $a\in A$ where $\psi$ is the map defined in the previous section.

\begin{lemma}\label{LemmaHSimpleOreExtMul0}
We have $\psi(ab)=\psi(a)b$ and $\psi(ba)=(g^{-1}b)\psi(a)$
for all $a \in A$ and $b \in v^{-1} \tilde J$.
\end{lemma}
\begin{proof} 
If $a\in \tilde J$, then the assertion of the lemma is obvious.
Since by Theorem~\ref{TheoremHSimpleOreExtStructure}
we have $A=vJ^{\tilde H}(A) + \tilde J$, it is sufficient to prove the equalities above in
the case when $a = vu$ for some $u\in J^{\tilde H}(A)$.
Note that by the definition $\psi(vu)=u$ for all $u\in J^{\tilde H}(A)$.
Moreover
$J^{\tilde H}(A)$ is an $\tilde H$-invariant two-sided ideal and
$(gu)(vb)\in J^{\tilde H}(A) \tilde J = 0$.
Hence $$\psi(ab)=\psi((vu)b)=\psi(v(ub))- \psi((gu)(vb)) = ub=\psi(vu)b=\psi(a)b.$$
Analogously, \begin{equation*}\begin{split}\psi(ba)=\psi(b(vu))=\psi(v((g^{-1}b)u))- \psi((vg^{-1}b)u) = \\ = (g^{-1}b)u-\zeta^{-1}\psi((g^{-1}vb)u)=(g^{-1}b)\psi(vu)-0=(g^{-1}b)\psi(a).\end{split}\end{equation*}
\end{proof}

Now we can prove a formula involving the powers of $\psi$. 

\begin{lemma}\label{LemmaHSimpleOreExtMul}
Let $a,b \in v^{-1} \tilde J$, $0\leqslant k,\ell < t$.
Then \begin{equation}\label{EqQuantumCoefMulHSimpleOreExt0}
\psi^k(a)\psi^\ell(b)=\tbinom{k+\ell}{k}_\zeta \psi^{k+\ell}
((g^\ell a) b).
\end{equation}
\end{lemma}
\begin{proof}
We prove the theorem by induction on $k+\ell$. If $k=0$ or $\ell = 0$,
(\ref{EqQuantumCoefMulHSimpleOreExt0}) is a consequence
of Lemma~\ref{LemmaHSimpleOreExtMul0}.
Suppose both $k,\ell \geqslant 1$.
Then $\psi^k(a), \psi^\ell(b) \in J^{\tilde H}(A)$.
For every $u\in A$ we have $v\psi(u)-u \in \tilde J$.
Since $ J^{\tilde H}(A)\tilde J = \tilde J J^{\tilde H}(A) = 0$,
we get $(g\psi^k(a))v\psi^\ell(b) = (g \psi^k(a))\psi^{\ell-1}(b)$
and $(v\psi^k(a))\psi^\ell(b) = \psi^{k-1}(a)\psi^\ell(b)$.
Therefore
 \begin{equation*}\begin{split}\psi^k(a)\psi^\ell(b)
= \psi(v(\psi^k(a)\psi^\ell(b)))=\\=\psi((g\psi^k(a))v\psi^\ell(b)
+(v\psi^k(a))\psi^\ell(b))=\psi(\zeta^k(\psi^k(g a))\psi^{\ell-1}(b)
+ \psi^{k-1}(a)\psi^\ell(b))=\\=\psi\left(
 \zeta^k \tbinom{k+\ell-1}k_\zeta \psi^{k+\ell-1}
((g^\ell a) b)
+ \tbinom{k+\ell-1}{k-1}_\zeta \psi^{k+\ell-1}
((g^\ell a) b)\right)=\tbinom{k+\ell}{k}_\zeta \psi^{k+\ell}
((g^\ell a) b).\end{split}\end{equation*}
\end{proof}
\begin{corollary}Let $m$ be the number defined in Theorem~\ref{TheoremHSimpleOreExtStructure}. Then $t=m$ and
$\zeta$ is a primitive $m$th root of unity.
\end{corollary}
\begin{proof} First we notice that the condition
$v^t \tilde J \subseteq \tilde J$ implies
that $I_t$ is an $H$-invariant ideal of $A$ and $m \leqslant t$.
In addition, $m \geqslant 2$ since $I_{m-1}=J^{\tilde H}(A) \ne 0$.
Note that $1_A \notin I_{m-1}$,
since $I_{m-1}$ is a nontrivial ideal. Hence $\psi^{m-1}(1_A) \in J^{\tilde H}(A) \backslash \lbrace 0 \rbrace$.
Since $v\psi(a)-a \in \tilde J$ for all $a\in A$,
we have $v\psi^{m-1}(1_A)=\psi^{m-2}(1_A)+j_1$
and $v\psi(1_A)=1_A+j_2$ for some $j_1, j_2 \in \tilde J$.
Note that $\psi^{m-1}(1_A) \psi(1_A)=\binom{m}{m-1}_\zeta \psi^m(1_A) = 0$.
However \begin{equation*}\begin{split}0=v(\psi^{m-1} (1_A)\psi(1_A))
=(v\psi^{m-1}(1_A))\psi(1_A)+(g\psi^{m-1}(1_A))v\psi(1_A)
=\\
(\psi^{m-2}(1_A)+j_1)\psi(1_A)+\zeta^{m-1}\psi^{m-1}(1_A)(1_A+j_2)
=\psi^{m-2}(1_A)\psi(1_A)+\zeta^{m-1}\psi^{m-2}(1_A)1_A
=\\ \left(\binom {m-1}{m-2}_\zeta+\zeta^{m-1}\right)\psi^{m-1}(1_A)
=m_\zeta\ \psi^{m-2}(1_A)\end{split}\end{equation*}
since $ J^{\tilde H}(A)\tilde J = \tilde J J^{\tilde H}(A) = 0$.
Hence $m_\zeta = 0$ and $t=m$.
\end{proof}
\begin{proof}[Proof of Theorem~\ref{TheoremHSimpleOreExtMul}.] 
By the assumption, $v^{m-1} \tilde J \subseteq v^{-1} \tilde J$.
Note that $$vJ^{\tilde H}(A)=v I_{m-1}=\bigoplus_{i=1}^{m-1} v^i\tilde J$$
and $v^{-1} \tilde J \cap J^{\tilde H}(A)=0$.
Hence $v^{-1} \tilde J = v^{m-1} \tilde J$.
Since $\tilde J$ is an $\tilde H$-invariant ideal,
$$v(ab)=(ga)(vb)+(va)b \in \tilde J\text{ for all }a,b \in v^{-1} \tilde J,$$
and
 $v^{-1} \tilde J$ is a subalgebra. Now $$A=v^{m-1} \tilde J \oplus J^{\tilde H}(A)\qquad\text{(direct sum of subspaces)}$$
implies $A=B\oplus J^{\tilde H}(A)$ and~(\ref{EqQuantumCoefMulHSimpleOreExt0}) implies~(\ref{EqQuantumCoefMulHSimpleOreExt}).
\end{proof}

\section{Polynomial $H$-identities}\label{SectionHPI}

The rest of the paper is devoted to polynomial $H$-identities. 

Denote by $F \langle X \rangle$ the free associative algebra without $1$
   on the set $X := \lbrace x_1, x_2, x_3, \ldots \rbrace$.
  Then $F \langle X \rangle = \bigoplus_{n=1}^\infty F \langle X \rangle^{(n)}$
  where $F \langle X \rangle^{(n)}$ is the linear span of all monomials of total degree $n$.
   Let $H$ be an arbitrary associative algebra with $1$ over $F$. Consider the algebra $$F \langle X | H\rangle
   :=  \bigoplus_{n=1}^\infty H^{{}\otimes n} \otimes F \langle X \rangle^{(n)}$$
   with the multiplication $(u_1 \otimes w_1)(u_2 \otimes w_2):=(u_1 \otimes u_2) \otimes w_1w_2$
   for all $u_1 \in  H^{{}\otimes j}$, $u_2 \in  H^{{}\otimes k}$,
   $w_1 \in F \langle X \rangle^{(j)}$, $w_2 \in F \langle X \rangle^{(k)}$.
We use the notation $$x^{h_1}_{i_1}
x^{h_2}_{i_2}\ldots x^{h_n}_{i_n} := (h_1 \otimes h_2 \otimes \ldots \otimes h_n) \otimes x_{i_1}
x_{i_2}\ldots x_{i_n}.$$ Here $h_1 \otimes h_2 \otimes \ldots \otimes h_n \in H^{{}\otimes n}$,
$x_{i_1} x_{i_2}\ldots x_{i_n} \in F \langle X \rangle^{(n)}$. 
In addition, we identify $x_i$ and $x_i^{1_H}$ and treat $X$ as a subset of 
$F \langle X | H\rangle$.

If $(\gamma_\beta)_{\beta \in \Lambda}$ is a basis in $H$, 
then $F\langle X | H \rangle$ is isomorphic to the free non-unital associative algebra over $F$ with free formal  generators $x_i^{\gamma_\beta}$, $\beta \in \Lambda$, $i \in \mathbb N$.
 We refer to the elements
 of $F\langle X | H \rangle$ as \textit{$H$-polynomials}.
 
Let $A$ be an associative algebra with a generalized $H$-action.
Any map $\psi \colon X \to A$ has the unique homomorphic extension $\bar\psi
\colon F \langle X | H \rangle \to A$ such that $\bar\psi(x_i^h)=h\psi(x_i)$
for all $i \in \mathbb N$ and $h \in H$.
 An $H$-polynomial
 $f \in F\langle X | H \rangle$
 is a \textit{polynomial $H$-identity} of $A$ if $\bar\psi(f)=0$
for all maps $\psi \colon X \to A$. In other words, $f(x_1, x_2, \ldots, x_n)$
 is an $H$-identity of $A$
if and only if $f(a_1, a_2, \ldots, a_n)=0$ for any $a_i \in A$.
 In this case we write $f \equiv 0$.
The set $\Id^{H}(A)$ of all polynomial $H$-identities
of $A$ is an ideal of $F\langle X | H \rangle$.

\begin{remark}
Note that here we do not consider any $H$-action on $F \langle X | H \rangle$.
However, extending the category of algebras with a generalized $H$-action
in a proper way, we can make $F \langle - | H \rangle$ a free functor corresponding to a free-forgetful adjunction. (In~\cite[Section 7.2]{ASGordienko16} the author considers the non-associative case, but similar observations
can be made in the case of associative and Lie algebras too.)
\end{remark}

Denote by $P^H_n$ the space of all multilinear $H$-polynomials
in $x_1, \ldots, x_n$, $n\in\mathbb N$, i.e.
$$P^{H}_n = \langle x^{h_1}_{\sigma(1)}
x^{h_2}_{\sigma(2)}\ldots x^{h_n}_{\sigma(n)}
\mid h_i \in H, \sigma\in S_n \rangle_F \subset F \langle X | H \rangle.$$
The number $c^H_n(A):=\dim\left(\frac{P^H_n}{P^H_n \cap \Id^H(A)}\right)$
is called the $n$th \textit{codimension of polynomial $H$-identities}
or the $n$th \textit{$H$-codimension} of $A$.

\begin{remark} Every algebra $A$ is an $H$-module algebra
for $H=F$. In this case the $H$-action is trivial and we get ordinary polynomial identities and their codimensions $c_n(A)$. 
\end{remark}
\begin{remark}
If $A$ is a finite dimensional algebra graded by a set $T$, then one can introduce the notion of graded polynomial identities and their codimensions $c_n^{T\text{-}\mathrm{gr}}(A)$. However, by~\cite[Lemma~4.3, Remarks~3.3 and~4.4]{ASGordienko16} we have $c_n^{T\text{-}\mathrm{gr}}(A)=c_n^{F^T}(A)$ for all $n\in \mathbb N$.
\end{remark}

Now we give an example of an infinite dimensional $H$-module algebra for an infinite dimensional Hopf
algebra $H$ such that all $H$-codimensions are infinite:

\begin{example}\label{ExampleInfCodim} Let be $F$ be a field and let $(G, \cdot)$ be the group $(\mathbb Q, +)$ written in the multiplicative form. Fix a number $m \in\mathbb N$
where $m \geqslant 2$. Denote by $\varphi$ the automorphism $G \to G$ where $\varphi(g)=g^m$
for all $g\in G$. 
Let $A: = FG$ be the group algebra of $G$. Then $\varphi$ is naturally extended to
 an automorphism $FG \mathrel{\widetilde\to} FG$, and $A$ becomes an algebra with the following $\mathbb Z$-action:
$b^k := \underbrace{\varphi(\varphi(\ldots (\varphi}_k(b)\ldots)$ for $k\in\mathbb N$
and $b\in A$. 
We claim that despite the fact that the algebra $A$ is commutative, 
$c_n^{F\mathbb Z}(A) = +\infty$ for all $n\in\mathbb N$.
Indeed, if $\omega \in G$ is the element corresponding to $2\in\mathbb Q$,
then the substitution $x_1=x_2=\dots=x_n=\omega$ shows that
the multilinear $F\mathbb Z$-polynomials $x_1^k x_2 \cdots x_n$, $k\in\mathbb Z$, are linearly independent modulo $\Id^{F\mathbb Z}(A)$.
(Recall that here $x_1^k$ denotes the $\mathbb Z$-action and not the raising to the $k$th power.)
\end{example}

One of the main tools in the investigation of polynomial
identities is provided by the representation theory of symmetric groups.
 The symmetric group $S_n$  acts
 on the space $\frac {P^H_n}{P^H_{n}
  \cap \Id^H(A)}$
  by permuting the variables. If the base field $F$ is of characteristic $0$,
  then irreducible $FS_n$-modules are described by partitions
  $\lambda=(\lambda_1, \ldots, \lambda_s)\vdash n$ and their
  Young diagrams $D_\lambda$.
   The character $\chi^H_n(A)$ of the
  $FS_n$-module $\frac {P^H_n}{P^H_n
   \cap \Id^H(A)}$ is
   called the $n$th
  \textit{cocharacter} of polynomial $H$-identities of $A$.
  We can rewrite it as
  a sum $$\chi^H_n(A)=\sum_{\lambda \vdash n}
   m(A, H, \lambda)\chi(\lambda)$$ of
  irreducible characters $\chi(\lambda)$.
Let  $e_{T_{\lambda}}=a_{T_{\lambda}} b_{T_{\lambda}}$
and
$e^{*}_{T_{\lambda}}=b_{T_{\lambda}} a_{T_{\lambda}}$
where
$a_{T_{\lambda}} = \sum_{\pi \in R_{T_\lambda}} \pi$
and
$b_{T_{\lambda}} = \sum_{\sigma \in C_{T_\lambda}}
 (\sign \sigma) \sigma$,
be the Young symmetrizers corresponding to a Young tableau~$T_\lambda$.
Then $M(\lambda) = FS_n e_{T_\lambda} \cong FS_n e^{*}_{T_\lambda}$
is an irreducible $FS_n$-module corresponding to
 a partition~$\lambda \vdash n$.
  We refer the reader to~\cite{Bahturin, DrenKurs, ZaiGia}
   for an account
  of $S_n$-representations and their applications to polynomial
  identities.

\section{Property (*)} \label{SectionPropertyStar}

In order to formulate our assumptions on $H$-simple algebras,
 we introduce the following property: 
\begin{enumerate}
\item[(*)] Suppose $B$ is a finite dimensional algebra with a generalized $H$-action for some associative algebra $H$ with $1$ over a field $F$. Let $a_1, \ldots, a_\ell$ be a basis of $B$.
 We say that $B$ satisfies Property (*) if $B$ is unital and
there exists a number $n_0 \in \mathbb N$ such that for every $k \in\mathbb N$
there exist a multilinear $H$-polynomial $$f=f(x_1^{(1)}, \ldots, x_\ell^{(1)}; \ldots;
x^{(2k)}_1, \ldots,  x^{(2k)}_\ell;\ z_1, \ldots, z_{n_1})$$
and elements $\bar z_i \in B$, $1\leqslant i \leqslant n_1$, $0\leqslant n_1 \leqslant n_0$, such that $f$ is alternating in $x_1^{(i)}, \ldots, x_\ell^{(i)}$
for each $1\leqslant i  \leqslant 2k$
and $f(a_1, \ldots, a_\ell; \ldots;
a_1, \ldots, a_\ell; \bar z_1, \ldots, \bar z_{n_1}) \ne 0$.
\end{enumerate}

\begin{example}\label{ExampleHSemiSimplePropertyStar}
If $B$ is a finite dimensional semisimple $H$-simple algebra with a generalized $H$-action over an algebraically closed field of characteristic $0$ (e.g. $B$ is an $H$-simple $H$-module algebra for a finite dimensional semisimple Hopf algebra $H$, see~\cite[Theorem~3.8]{LinMontSmall}), then by~\cite[Theorem~7]{ASGordienko3} the algebra $B$ satisfies Property (*). (In~\cite[Theorem~7]{ASGordienko3} the author requires $\dim H < +\infty$, but one can replace
$H$ with its image $H_1$ in $\End_F(A)$ and notice that $\dim H_1 < +\infty$ and $c_n^H(A) = c_n^{H_1}(A)$ for all $n\in\mathbb N$.) 
\end{example}

\begin{example}
If $B$ is a finite dimensional $H_{m^2}(\zeta)$-simple $H_{m^2}(\zeta)$-module algebra
over an algebraically closed field of characteristic $0$ for a Taft algebra $H_{m^2}(\zeta)$, then 
by~\cite[Lemma~7]{ASGordienko12} the algebra $B$ satisfies Property (*).
\end{example}

In fact, a more general theorem holds:

\begin{theorem}\label{TheoremHOrePropertyStar}
Let $H$ be a Hopf algebra over an algebraically closed field $F$ of characteristic $0$ generated as an algebra by a subHopfalgebra $\tilde H$ and a skew-primitive element $v\in H$, $\Delta v = g_1 \otimes v + v \otimes g_2$, $g_1, g_2 \in G(\tilde H)$.
Also suppose that there exists an algebra automorphism $\varphi \colon \tilde H \to \tilde H$
such that $vh-\varphi(h)v\in \tilde H$ for all $h\in\tilde H$.
Suppose that all finite dimensional $\tilde H$-simple algebras
satisfy Property (*). Then all finite dimensional $H$-simple algebras
satisfy Property (*) too.
\end{theorem}
\begin{corollary}\label{CorollaryOreExtStar}
If $B$ is a finite dimensional $H$-simple $H$-module algebra
for a Hopf algebra $H$ over an algebraically closed field of characteristic $0$ such that $H$ is constructed by an iterated Ore extension of a finite dimensional semisimple Hopf algebra by skew-primitive elements, then the algebra $B$ satisfies Property (*).
\end{corollary}

\begin{proof}[Proof of Theorem~\ref{TheoremHOrePropertyStar}.]
Let $A$ be a finite-dimensional $H$-simple algebra.
If $A$ is $\tilde H$-simple, then $A$ satisfies Property (*).
Now Lemma~\ref{LemmaHOreRadical} implies that without loss of generality we may assume that $J^{\tilde H}(A) \ne 0$.
By Theorem~\ref{TheoremHSimpleOreExtStructure},
there exists $m\in\mathbb N$ such that
$A = \bigoplus_{k=0}^{m-1}v^k \tilde J$, $J^{\tilde H}(A)=
\bigoplus_{k=0}^{m-2}v^k \tilde J$.
Moreover, $A/J^{\tilde H}(A)$ is a $\tilde H$-simple algebra.
By the assumptions of Theorem~\ref{TheoremHOrePropertyStar},
$A/J^{\tilde H}(A)$ satisfies Property (*).
Let $a_1, \ldots, a_\ell$ be a basis of $\tilde J$.
Then the images $b_1, \ldots, b_\ell$ of $v^{m-1}a_1, \ldots, v^{m-1}a_\ell$
in $A/J^{\tilde H}(A)$ form a basis of $A/J^{\tilde H}(A)$.

By Property (*) there exists 
$n_0 \in \mathbb N$ such that for every $k \in\mathbb N$
there exist a multilinear $\tilde H$-polynomial $$\tilde f= \tilde f(x_1^{(1)}, \ldots, x_\ell^{(1)}; \ldots;
x^{(2k)}_1, \ldots,  x^{(2k)}_\ell;\ z_1, \ldots, z_{n_1})$$
and elements $\bar z_i \in A/J^{\tilde H}(A)$, $1\leqslant i \leqslant n_1$, $0\leqslant n_1 \leqslant n_0$, such that $\tilde f$ is alternating in $x_1^{(i)}, \ldots, x_\ell^{(i)}$
for each $1\leqslant i  \leqslant 2k$
and $b:=\tilde f(b_1, \ldots, b_\ell; \ldots;
b_1, \ldots, b_\ell; \bar z_1, \ldots, \bar z_{n_1}) \ne 0$.
Since the $\tilde H$-invariant ideal generated by $b$ coincides
with $A/J^{\tilde H}(A)$ and $\left(A/J^{\tilde H}(A)\right)^2=A/J^{\tilde H}(A)$, there exist $h_1, \ldots, h_m
\in\tilde H$ and $\bar w_1, \ldots, \bar w_{m-1} \in A/J^{\tilde H}(A)$ such that $$(h_1 b) \bar w_1 (h_2 b) \bar w_2 \ldots (h_{m-1} b) \bar w_{m-1} (h_m b) \ne 0.$$
Define the following multilinear functions that can be presented 
as $\tilde H$- and $H$-polynomials, respectively, using~(\ref{EqGeneralizedHopf}):
 \begin{equation*}\begin{split}
\tilde f_0 := \left(\tilde f(x_{11}^{(1)}, \ldots, x_{1\ell}^{(1)}; \ldots;
x^{(2k)}_{11}, \ldots,  x^{(2k)}_{1\ell};\ z_{11}, \ldots, z_{1n_1})\right)^{h_1}\cdot\\ \prod_{i=2}^m w_{i-1} \left(\tilde f(x_{i1}^{(1)}, \ldots, x_{i\ell}^{(1)}; \ldots;
x^{(2k)}_{i1}, \ldots,  x^{(2k)}_{i\ell};\ z_{i1}, \ldots, z_{in_1})\right)^{h_i}
\end{split}\end{equation*}
and 
\begin{equation*}\begin{split}
f_0 := \left(\tilde f(x_{11}^{(1)}, \ldots, x_{1\ell}^{(1)}; \ldots;
x^{(2k)}_{11}, \ldots,  x^{(2k)}_{1\ell};\ z_{11}, \ldots, z_{1n_1})\right)^{h_1}\cdot\\ \prod_{i=2}^m w_{i-1} \left(\tilde f\left(\left(x_{i1}^{(1)}\right)^{v^{i-1}}, \ldots, \left( x_{i\ell}^{(1)}\right)^{v^{i-1}}; \ldots;
\left(x^{(2k)}_{i1} \right)^{v^{i-1}}, \ldots,  \left( x^{(2k)}_{i\ell} \right)^{v^{i-1}};\ z_{i1}, \ldots, z_{in_1}\right)\right)^{h_i}.
\end{split}\end{equation*}

Then $\tilde f_0$ does not vanish under the substitution
$x_{ij}^{(t)}=b_j$ for $1\leqslant j \leqslant \ell$, $1\leqslant t \leqslant 2k$,
$z_{ij}=\bar z_j$ for $1\leqslant j \leqslant n_1$,
$1\leqslant i \leqslant m$, and
$w_i = \bar w_i$ for $1\leqslant i \leqslant m-1$.
Considering some preimages $u_i$ of $\bar w_i$
and some preimages $q_i$ of $\bar z_i$ in $A$,
 we obtain that the value of $f_0$ under the substitution
$x_{ij}^{(t)}=v^{m-i}a_j$ for $1\leqslant j \leqslant \ell$, $1\leqslant t \leqslant 2k$,
$z_{ij}=q_j$ for $1\leqslant j \leqslant n_1$,
$1\leqslant i \leqslant m$, and
$w_i = u_i$ for $1\leqslant i \leqslant m-1$,
does not belong to $J^{\tilde H}(A)$.

Denote the last substitution above by $\Xi$ and the value of $f_0$ under the substitution $\Xi$ by $a$.

Let $f := \Alt_1 \ldots \Alt_{2k} f_0$ where $\Alt_t$
is the operator of alternation in $x^{(t)}_{ij}$, 
$1\leqslant i \leqslant m$, $1\leqslant j \leqslant \ell$.
Consider the image of the value of $f$ under the substitution $\Xi$
in $A/J^{\tilde H}(A)$. If an alternation replaces $x^{(t)}_{ij}$
with $x^{(t)}_{i'j'}$ where $i < i'$, then the value
of $(x^{(t)}_{i'j'})^{v^{i-1}}$ under $\Xi$ equals $v^{m-1+(i-i')}a_{j'}
\in J^{\tilde H}(A)$, i.e. the image of the whole term in 
$A/J^{\tilde H}(A)$ is zero. Therefore, working modulo $J^{\tilde H}(A)$,
we may assume that the alternations replace $x^{(t)}_{ij}$
with $x^{(t)}_{ij'}$ for the same $i,t$. Taking into account that
$\tilde f$ is alternating in $x_1^{(i)}, \ldots, x_\ell^{(i)}$
for each $1\leqslant i  \leqslant 2k$,
we obtain that the value of $f$ under the substitution $\Xi$
belongs to $(\ell!)^{2km}a+J^{\tilde H}(A)$, i.e. is nonzero.

The polynomial $f$ satisfy all the requirements of Property (*)
for $H$. Therefore, all finite dimensional $H$-simple algebras
satisfy Property (*). 
\end{proof}

\section{Growth of polynomial $H$-identities}\label{SectionGrowthHIdAssoc}

Now we are ready to formulate the main theorem which will be proved at the end of the paper:

\begin{theorem}\label{TheoremHmodHRadAmitsurPIexpHBdimB}
Let $A$ be a finite dimensional non-nilpotent associative algebra with a generalized $H$-action for some associative algebra $H$ with $1$ over an algebraically closed field $F$ of characteristic $0$. Suppose $A/J^H(A) = B_1 \oplus B_2 \oplus \ldots
\oplus B_q$ (direct sum of $H$-invariant ideals) for some $H$-simple algebras $B_i$
satisfying Property~(*).
Let $\varkappa \colon A/J^H(A) \hookrightarrow A$ be the $(B, B)$-bimodule embedding
from Lemma~\ref{LemmaWeakWedderburnMalcevHRad} and 
let $$d:= \max\dim\left( B_{i_1}\oplus B_{i_2} \oplus \ldots \oplus B_{i_r}
 \mathbin{\Bigl|}  r \geqslant 1,\right.$$
  \begin{equation}\label{EqdAssoc} \left. (H\varkappa(B_{i_1}))A^+ \,(H\varkappa(B_{i_2})) A^+ \ldots (H\varkappa(B_{i_{r-1}})) A^+\,(H\varkappa(B_{i_r}))\ne 0\right)
  \end{equation} where
 $A^+:=A+F\cdot 1$.
Then there exist $C_1,C_2 > 0$ and $r_1,r_2 \in \mathbb R$ such that $$C_1 n^{r_1} d^n \leqslant c^H_n(A) \leqslant C_2 n^{r_2} d^n$$
for all $n\in\mathbb N$.
\end{theorem}

\begin{remark}
If $A$ is nilpotent, i.e. $x_1 \ldots x_p\equiv 0$ for some $p\in\mathbb N$, then
$P^{H}_n \subseteq \Id^{H}(A)$ and $c^H_n(A)=0$ for all $n \geqslant p$.
\end{remark}
\begin{corollary}
Consequently, there exists integer $\PIexp^H(A):=\lim_{n\to\infty}\sqrt[n]{c_n^H(A)}= d$ and the analog of Amitsur's conjecture holds for $A$.
\end{corollary}

\begin{corollary}\label{CorollaryHOreAmitsur}
Let $H$ be a Hopf algebra over a field $F$ of characteristic $0$ such that $H$ is constructed by an iterated Ore extension of a finite dimensional semisimple Hopf algebra by skew-primitive elements (e.g. $H$ is a Taft algebra $H_{m^2}(\zeta)$). Let $A$ be a finite dimensional non-nilpotent associative $H$-module algebra.
 Then there exist constants $C_1, C_2 > 0$, $r_1, r_2 \in \mathbb R$,
  $d \in \mathbb N$ such that $C_1 n^{r_1} d^n \leqslant c^{H}_n(A)
   \leqslant C_2 n^{r_2} d^n$ for all $n \in \mathbb N$.
   In particular, the analog of Amitsur's conjecture holds
 for $A$.
\end{corollary}
\begin{proof}
Note that $H$-codimensions do not change upon an extension of the base field.
The proof is analogous to the case of ordinary codimensions~\cite[Theorem~4.1.9]{ZaiGia}.
Hence we may assume the base field $F$ to be algebraically closed.
By Corollary~\ref{CorollaryOreExtStar} all finite dimensional $H_{m^2}(\zeta)$-simple algebras
over $F$ satisfy Property (*).
By Theorem~\ref{TheoremSkryabinVanOystaeyen} and Lemma~\ref{LemmaHSemiSimpleIsUnital}, $A/J^{H_{m^2}(\zeta)}(A) = B_1 \oplus \ldots \oplus B_q$ (direct sum of $H_{m^2}(\zeta)$-invariant ideals) for some $H_{m^2}(\zeta)$-simple algebras $B_i$. Now we apply Theorem~\ref{TheoremHmodHRadAmitsurPIexpHBdimB}.
\end{proof}

  In~\cite[Theorem 2]{ASGordienko8} the author required invertibility of the antipode.
Using Theorem~\ref{TheoremSkryabinVanOystaeyen}, one can remove this assumption:

\begin{theorem}\label{TheoremHmoduleAssoc}
Let $A$ be a finite dimensional non-nilpotent associative $H$-module algebra for a Hopf algebra $H$
over a field $F$ of characteristic $0$. Suppose that the Jacobson radical $J(A)$ is an $H$-submodule.
Then there exist constants $d\in\mathbb N$, $C_1, C_2 > 0$, $r_1, r_2 \in \mathbb R$ such that $$C_1 n^{r_1} d^n \leqslant c^{H}_n(A) \leqslant C_2 n^{r_2} d^n\text{ for all }n \in \mathbb N.$$
\end{theorem}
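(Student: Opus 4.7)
The plan is to reduce Theorem~\ref{TheoremHmoduleAssoc} directly to Theorem~\ref{TheoremHmodHRadAmitsur}: once the decomposition hypothesis of the latter is verified, both the upper and the lower codimension bounds follow from it. Since $H$-codimensions are invariant under base field extension (the argument in the proof of Theorem~\ref{TheoremTaftAmitsur} applies verbatim), I may assume $F$ to be algebraically closed.

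First I would observe that $J^H(A)=J(A)$. The inclusion $J(A)\subseteq J^H(A)$ holds because, by assumption, $J(A)$ is a nilpotent $H$-invariant ideal of $A$; the reverse inclusion is automatic since any nilpotent ideal is contained in the Jacobson radical. Hence $A/J^H(A)=A/J(A)$ is semisimple in the ordinary sense, and the remark following Lemma~\ref{LemmaHSemiSimpleIsUnital} (which combines Lemma~\ref{LemmaHSemiSimpleIsUnital} with Theorem~\ref{TheoremSkryabinVanOystaeyen}) yields a decomposition $A/J^H(A)=B_1\oplus\cdots\oplus B_q$ as a direct sum of $H$-invariant ideals, where each $B_i$ is an $H$-simple $H$-module algebra. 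Being a direct summand of the semisimple algebra $A/J(A)$, each $B_i$ is itself semisimple in the ordinary sense and therefore satisfies Property~(*) by \cite[Theorem~7]{ASGordienko3}, as recorded in the first example following Property~(*).

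With the decomposition and Property~(*) in hand for each simple component, Theorem~\ref{TheoremHmodHRadAmitsur} delivers at once the required two-sided estimate $C_1 n^{r_1}d^n\leqslant c_n^H(A)\leqslant C_2 n^{r_2} d^n$, with $d$ given by formula~(\ref{EqdAssoc}) applied to the $B_i$. The only conceptual step — and the place where the present argument improves upon \cite[Theorem~2]{ASGordienko8} — is the appeal to Theorem~\ref{TheoremSkryabinVanOystaeyen}, which supplies the decomposition of $A/J(A)$ into $H$-simple $H$-module algebras for an arbitrary Hopf algebra $H$, bypassing the invertibility-of-antipode hypothesis needed in the earlier argument.
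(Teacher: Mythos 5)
Your proposal is correct and follows essentially the same route as the paper: reduce to an algebraically closed field, identify $J^H(A)$ with $J(A)$, invoke Theorem~\ref{TheoremSkryabinVanOystaeyen} (via Lemma~\ref{LemmaHSemiSimpleIsUnital}) to decompose $A/J(A)$ into $H$-simple ideals, note these are semisimple so Property~(*) holds by \cite[Theorem~7]{ASGordienko3}, and apply Theorem~\ref{TheoremHmodHRadAmitsur}. The only detail the paper spells out that you pass over is the verification that $J(A\otimes_F K)=J(A)\otimes_F K$ remains $H\otimes_F K$-invariant, so that the hypothesis survives the base field extension; this is routine and does not affect correctness.
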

\begin{proof}
Let $K \supset F$ be an extension of the field $F$.
Since $A/J(A)$ is semisimple and $\ch F= 0$, $$(A \otimes_F K)/(J(A) \otimes_F K) \cong (A/J(A)) \otimes_F K$$
is again a semisimple algebra (see e.g.~\cite[Section~10.7, Corollary b]{PierceAssoc}). Since $J(A) \otimes_F K$ is nilpotent, $J(A \otimes_F K) = J(A) \otimes_F K$.
In particular, $J(A \otimes_F K)$ is still $H\otimes_F K$-invariant.
Again, $H$-codimensions do not change upon an extension of the base field. 
Thus we may assume $F$ to be algebraically closed.
By Theorem~\ref{TheoremSkryabinVanOystaeyen}, $A/J(A) = B_1 \oplus \ldots \oplus B_q$ (direct sum of $H$-invariant ideals) for some $H$-simple algebras $B_i$. Since $A/J(A)$ is semisimple, $B_i$ are semisimple too. By~\cite[Theorem~7]{ASGordienko3}, $B_i$ satisfy Property~(*).
Now we apply Theorem~\ref{TheoremHmodHRadAmitsurPIexpHBdimB}.
\end{proof}

Note that the upper bound from Theorem~\ref{TheoremHmodHRadAmitsurPIexpHBdimB} was proved in~\cite[Lemma 2]{ASGordienko8}. In order to prove the lower bound, we construct
a polynomial alternating in sufficiently many sufficiently large sets of variables.

\begin{lemma}\label{LemmaAssocLowerPolynomial}
Let $A$, $\varkappa$, $B_i$, and $d$ be the same as in Theorem~\ref{TheoremHmodHRadAmitsurPIexpHBdimB}.
If $d > 0$, then there exists a number $n_0 \in \mathbb N$ such that for every $n\geqslant n_0$
there exist disjoint subsets $X_1$, \ldots, $X_{2k} \subseteq \lbrace x_1, \ldots, x_n
\rbrace$, $k := \left[\frac{n-n_0}{2d}\right]$,
$|X_1| = \ldots = |X_{2k}|=d$ and a polynomial $f \in P^H_n \backslash
\Id^H(A)$ alternating in the variables of each set $X_j$.
\end{lemma}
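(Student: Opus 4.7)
My plan is to construct $f$ in three stages: (i) pick a sequence $(i_1,\ldots,i_r)$ and witnesses realizing $d$; (ii) apply Property~(*) to each $B_{i_s}$ to get polynomials with the required per-summand alternations; (iii) splice them into a global polynomial whose alternating sets, after a suitable antisymmetrization, extend over all of the sizes $d_1+\cdots+d_r=d$.

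Concretely, choose $(i_1,\ldots,i_r)$ attaining the maximum in~\eqref{EqdAssoc} together with $h_s\in H$, $b_s\in B_{i_s}$, $w_s\in A^+$ such that
$$\tilde a := (h_1\varkappa(b_1))\,w_1\,(h_2\varkappa(b_2))\,w_2\cdots w_{r-1}\,(h_r\varkappa(b_r))\ne 0.$$
Set $d_s := \dim B_{i_s}$, so $d = \sum_s d_s$. For each $s$ and $k$, Property~(*) on $B_{i_s}$ produces a multilinear $H$-polynomial $\phi_{s,k}$ alternating in $2k$ sets $Y_s^{(1)},\ldots,Y_s^{(2k)}$ of $d_s$ variables and at most $n_0^{(s)}$ auxiliary variables, non-vanishing on a basis of $B_{i_s}$. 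Using disjoint variables for different $s$, fresh ``glue'' variables $\hat\alpha_s,\hat\beta_s,\hat w_s$, and the $H$-actions $h_s$, I form
$$g_k := \bigl(\hat\alpha_1\,\phi_{1,k}\,\hat\beta_1\bigr)^{h_1}\,\hat w_1\,\bigl(\hat\alpha_2\,\phi_{2,k}\,\hat\beta_2\bigr)^{h_2}\,\hat w_2\cdots\hat w_{r-1}\,\bigl(\hat\alpha_r\,\phi_{r,k}\,\hat\beta_r\bigr)^{h_r}$$
and
$$f := \Alt_{X_1}\cdots\Alt_{X_{2k}}\,g_k,\qquad X_j := Y_1^{(j)}\sqcup\cdots\sqcup Y_r^{(j)},\ |X_j|=d.$$
The wrapping variables $\hat\alpha_s,\hat\beta_s$ are included so that, by the density theorem applied to the $H$-simple algebra $B_{i_s}$, their evaluations can convert the specific nonzero output of $\phi_{s,k}$ on $B_{i_s}$ into the particular element $\varkappa(b_s)$ demanded by the witness $\tilde a$.

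The hardest step will be showing $f\not\equiv 0$. The natural substitution sends each $Y_s^{(j)}$ to the $\varkappa$-image of a basis of $B_{i_s}$ and the auxiliaries to the prescribed $\bar z$'s; the block-diagonal permutations in $\Alt_{X_j}$ should then reproduce $\tilde a$ up to a nonzero scalar factor inherited from Property~(*). The danger is that non-block-diagonal permutations, which substitute elements of $\varkappa(B_{i_t})$ into $\phi_{s,k}$-slots with $t\ne s$, might cancel the block-diagonal contribution. Here the maximality of $d$ is crucial: any rearrangement $(i_{\sigma(1)},\ldots,i_{\sigma(r)})$ whose associated witness product is nonzero still has total dimension $d$, so the ``admissible'' rearrangements form a restricted set; combined with the alternating signs coming from $\Alt_{X_j}$, one must show that these contributions either vanish outright in $A$ (because the induced sequence of $H\varkappa(B_i)$'s lies in a subspace killed by maximality of $d$) or collapse into signed copies of the block-diagonal term in a way that leaves it uncancelled. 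Making this rigorous is delicate and should exploit both the $(B,B)$-bimodule structure provided by $\varkappa$ from Lemma~\ref{LemmaWeakWedderburnMalcevHRad} and the density theorem for each $H$-simple algebra $B_{i_s}$. Counting the free variables, $f$ uses $2kd$ alternating variables and a uniformly bounded number $\tilde n_0$ of glue/auxiliary ones (depending only on $r$ and the $n_0^{(s)}$), so with $n_0 := \tilde n_0 + 2d$ absorbing the latter plus a buffer and $k := \lfloor(n-n_0)/(2d)\rfloor$, padded with dummy non-alternating variables if $n$ exceeds the exact count, we obtain $f\in P^H_n\setminus\Id^H(A)$ for every $n\ge n_0$.
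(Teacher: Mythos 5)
Your outline follows the same architecture as the paper's proof (a witness product realizing $d$, Property~(*) polynomials for each $B_{i_s}$, glue variables resolved by $H$-simplicity, alternation over the unions $X_j$), but it stops exactly where the real work begins, and the mechanism you hint at for the non-vanishing step is not the one that works. The central obstacle is that $\varkappa$ is only an $F$-linear $(B,B)$-bimodule section of $\pi$: it is neither an algebra homomorphism nor an $H$-module map, so $h\varkappa(a)-\varkappa(ha)$ and $\varkappa(a)\varkappa(b)-\varkappa(ab)$ are merely elements of $J:=J^H(A)$. Consequently you cannot directly transfer the non-vanishing of $\phi_{s,k}$ on $B_{i_s}$ (which lives in the quotient $A/J$) to a non-vanishing evaluation in $A$, nor conclude that a slot of $\phi_{s,k}$ filled with an element of $\varkappa(B_{i_t})$, $t\ne s$, contributes zero --- it only contributes something in $J$. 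The missing idea is the radical-saturation trick: before building $f$, multiply the witness product by elements of $J$ (interleaved with elements of $A^+$) so that the total number of radical factors is \emph{maximal} subject to the product remaining nonzero. Then any substitution introducing one more factor from $J$ annihilates the whole product (cf.~(\ref{EqAssocbazero})), and this single fact both forces $\varkappa$ to behave as an $H$-module algebra homomorphism in all the relevant entries and kills every non-block-diagonal term of $\Alt_{X_1}\cdots\Alt_{X_{2k}}$, because distinct $B_i$ are ideals of $A/J$ annihilating one another, so their $\varkappa$-images multiply into $J$. The maximality of $d$, which you invoke to control the cross-terms, plays no role in this argument; it is the ideal decomposition of $A/J^H(A)$ together with the saturation by $J$ that prevents cancellation.

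Two smaller gaps. First, your glue variables $\hat\alpha_s,\hat\beta_s$ must be evaluated at $\varkappa$-images of the elements produced by $H$-simplicity in the form $b=\sum_\ell b_\ell(h_\ell\hat b)\tilde b_\ell$, and one also needs right factors $\varkappa(1_{B_{i_s}})$ (available because Property~(*) forces each $B_i$ to be unital, and one checks $1_{B_i}\in B$) to anchor the bimodule identities for $\varkappa$. Second, ``padded with dummy non-alternating variables'' is not automatic in a non-unital algebra: multiplying the nonzero witness by arbitrary extra factors may give zero. The paper instead inserts an additional Property-(*) polynomial $\tilde f_1$ on $B_1$ with enough alternating sets to overshoot degree $n$, and then deletes \emph{leading} variables, which manifestly preserves non-vanishing; your definition of $n_0$ and your degree count need to be adjusted to accommodate such a device.
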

\begin{proof} Let $J := J^H(A)$.
Without loss of generality,
we may assume that $$d = \dim(B_1 \oplus B_2 \oplus \ldots \oplus B_r)$$
where 
$(H\varkappa(B_1))A^+ (H\varkappa(B_2))A^+ \ldots (H\varkappa(B_{r-1}))A^+ (H\varkappa(B_r))\ne 0$. 
  
 Since $J$ is nilpotent, we can find maximal $\sum_{i=1}^r q_i$, $q_i \in \mathbb Z_+$,
 such that
$$\left(a_1 \prod_{i=1}^{q_1} j_{1i}\right) ({\gamma_1}\varkappa(b_1))
\left(a_2 \prod_{i=1}^{q_2} j_{2i}\right)
 ({\gamma_2}\varkappa(b_2)) \ldots \left(a_r \prod_{i=1}^{q_r} j_{ri}\right)
 ({\gamma_r}\varkappa(b_r))  \left(a_{r+1} \prod_{i=1}^{q_{r+1}} j_{r+1,i}\right) \ne 0$$ for
  some $j_{ki}\in J$, $a_k \in A^+$, $b_k \in B_i$, $\gamma_k \in H$.
 Let $j_k := a_k\prod_{i=1}^{q_k} j_{ki}$.
 
 Then \begin{equation}\label{EqAssocNonZero}j_1 ({\gamma_1}\varkappa(b_1))
j_2  ({\gamma_2}\varkappa(b_2)) \ldots j_r  ({\gamma_r}\varkappa(b_r))j_{r+1} \ne 0\end{equation}
 for some $b_i \in B_i$, $\gamma_i \in H,$
 however
 \begin{equation}\label{EqAssocbazero}j_1
  \tilde b_1 j_2 \tilde b_2
  \ldots j_r \tilde b_r j_{r+1} = 0 \end{equation} for all $\tilde b_i\in A^+(H\varkappa(B_i))A^+$  such that $\tilde b_k\in J(H\varkappa(B_k))A^+ + A^+(H\varkappa(B_k))J$ for at least one $k$.

Let $a^{(i)}_{k}$, $1 \leqslant k \leqslant d_i := \dim B_i$,
 be a basis in $B_{i}$, $1 \leqslant i \leqslant r$.
 
 By the virtue of Property~(*),
there exist constants $\tilde m_i \in \mathbb Z_+$
such that for any $k$ there exist
 multilinear polynomials $$f_i=f_i(x^{(i, 1)}_1,
 \ldots, x^{(i, 1)}_{d_i};
 \ldots;  x^{(i, 2k)}_1,
 \ldots, x^{(i, 2k)}_{d_i}; z^{(i)}_1, \ldots, z^{(i)}_{m_i}) \in P^H_{2k d_i+m_i} \backslash \Id^H(B_i),$$
   $0 \leqslant m_i \leqslant \tilde m_i$,
alternating in the variables from disjoint sets
$X^{(i)}_{\ell}=\lbrace x^{(i, \ell)}_1, x^{(i, \ell)}_2,
\ldots, x^{(i, \ell)}_{d_i} \rbrace$, $1 \leqslant \ell \leqslant 2k$.
In particular, there exist $\bar z^{(i)}_\alpha \in B_i$, $1 \leqslant \alpha \leqslant m_i$,
such that
$$\hat b_i := f_i(a^{(i)}_1,
 \ldots, a^{(i)}_{d_i};
 \ldots;  a^{(i)}_1,
 \ldots, a^{(i)}_{d_i}; \bar z^{(i)}_1, \ldots, \bar z^{(i)}_{m_i})\ne 0.$$
 
 Let $n_0 := 3r-1+\sum_{i=1}^r \tilde m_i$, $k := \left[\frac{n-n_0}{2d}\right]$, $\tilde k := \left[\frac{n-2kd}{2d_1}\right]+1$. 
Fix polynomials $f_i$, $1 \leqslant i \leqslant r$, for this particular choice of $k$.
 In addition, again by Property~(*),
  take $\tilde f_1=\tilde f_1(x^{(1)}_1,
 \ldots, x^{(1)}_{d_i};
 \ldots;  x^{(2\tilde k)}_1,
 \ldots, x^{(2\tilde k)}_{d_i}; z_1, \ldots, z_{\hat m_1}) \in P^H_{2\tilde k d_1+\hat m_1} \backslash \Id^H(B_1)$ where $0\leqslant \hat m_1 \leqslant \tilde m_1$
 and 
$$\hat b := \tilde f_1(a^{(1)}_1,
 \ldots, a^{(1)}_{d_1};
 \ldots;  a^{(1)}_1,
 \ldots, a^{(1)}_{d_1}; \bar z_1, \ldots, \bar z_{\hat m_1})\ne 0$$
 for some $\bar z_1, \ldots, \bar z_{\hat m_1} \in B_1$.

 Since $B_i$ are $H$-simple, there exist elements
 $h_{i\ell} \in H$, $b_{i\ell}, \tilde b_{i\ell} \in B_i$, $\tilde b_\ell \in B_1$ 
 such that $\sum_\ell b_{i\ell} (h_{i\ell}\hat b_i) \tilde b_{i\ell} = b_i$
 for all $2\leqslant i \leqslant r$ and $\sum_\ell \tilde b_\ell (h_{0\ell}\hat b)  b_{1\ell} (h_{1\ell}\hat b_1) \tilde b_{1\ell} = b_1$.
 
 Now \begin{equation*}\begin{split}
 j_1 \Biggl(\gamma_1\varkappa \biggl(\sum_{s_1} \tilde b_{s_1} \left( h_{0 s_1}
\tilde f_1\bigl( a^{(1)}_1,
 \ldots, a^{(1)}_{d_1};
 \ldots;  a^{(1)}_1,
 \ldots, a^{(1)}_{d_1}; \bar z_1, \ldots, \bar z_{\hat m_1}\bigr)\right)  b_{1 s_1}  \cdot \\ \cdot
   \left( h_{1 s_1} f_1\bigl(a^{(1)}_1,
 \ldots, a^{(1)}_{d_1};
 \ldots;  a^{(1)}_1,
 \ldots, a^{(1)}_{d_1}; \bar z^{(1)}_1, \ldots, \bar z^{(1)}_{m_1} \bigr)\right) \tilde b_{1 s_1} \biggr) 
 \Biggr)j_2
 \cdot \\ \cdot
    \prod_{i=2}^{r}\Biggl(\gamma_i\varkappa\biggl(\sum_{s_i} b_{is_i} \left(h_{i s_i}  f_i\bigl(a^{(i)}_1,
 \ldots, a^{(i)}_{d_i};
 \ldots;  a^{(i)}_1,
 \ldots, a^{(i)}_{d_i}; \bar z^{(i)}_1, \ldots, \bar z^{(i)}_{m_i}\bigr)\right)\tilde b_{is_i} \biggr) \Biggr)j_{i+1}
 \end{split}
 \end{equation*} 
  equals the left-hand side of~(\ref{EqAssocNonZero}), which is nonzero.
  Therefore we can fix indices $s_1, \ldots, s_r$ such that 
\begin{equation*}\begin{split}
 a:= j_1 \Biggl(\gamma_1\varkappa \biggl(\tilde b_{s_1} \left( h_{0 s_1}
\tilde f_1\bigl( a^{(1)}_1,
 \ldots, a^{(1)}_{d_1};
 \ldots;  a^{(1)}_1,
 \ldots, a^{(1)}_{d_1}; \bar z_1, \ldots, \bar z_{\hat m_1}\bigr)\right) b_{1 s_1}  \cdot \\ \cdot
   \left( h_{1 s_1} f_1\bigl(a^{(1)}_1,
 \ldots, a^{(1)}_{d_1};
 \ldots;  a^{(1)}_1,
 \ldots, a^{(1)}_{d_1}; \bar z^{(1)}_1, \ldots, \bar z^{(1)}_{m_1} \bigr)\right) \tilde b_{1 s_1} \biggr)
 \Biggr) 
 j_2 \cdot \\ \cdot
   \prod_{i=2}^{r}\Biggl(\gamma_i\varkappa\biggl(b_{is_i} \left(h_{i s_i}  f_i\bigl(a^{(i)}_1,
 \ldots, a^{(i)}_{d_i};
 \ldots;  a^{(i)}_1,
 \ldots, a^{(i)}_{d_i}; \bar z^{(i)}_1, \ldots, \bar z^{(i)}_{m_i}\bigr)\right)\tilde b_{is_i} \biggr) \Biggr)j_{i+1}\ne 0.
 \end{split}
 \end{equation*}   

Let $B$ be the maximal semisimple subalgebra of $A/J^H(A)$ fixed in Lemma~\ref{LemmaWeakWedderburnMalcevHRad}. Since by Property~(*)
 all $B_i$ are unital, $A/J^H(A)=B_1 \oplus B_2 \oplus \ldots
\oplus B_q$ (direct sum of $H$-invariant ideals) is unital too and $1_B = 1_{A/J^H(A)}$.
Let $\tilde B_i := 1_{B_i} B$. 
Since $\tilde B_i$ are homomorphic images of the semisimple algebra $B$, they are semisimple too.
Now $B \subseteq \tilde B_1 \oplus \tilde B_2 \oplus \ldots
\oplus \tilde B_q$ and the maximality of $B$ imply 
 $B= \tilde B_1 \oplus \tilde B_2 \oplus \ldots
\oplus \tilde B_q$ (direct sum of ideals) and $\tilde B_i$ are maximal semisimple subalgebras
of $B_i$. Hence $1_{\tilde B_i} = 1_{B_i}$ and $1_{B_i} \in B$.
 
 Recall that $\varkappa$ is a homomorphism of $(B,B)$-bimodules.
In particular, $\varkappa(b)=\varkappa(b)\varkappa(1_{B_i})$
for every $b \in B_i$. Hence
\begin{equation*}\begin{split}
 a= j_1 \Biggl(\gamma_1\biggl(\varkappa \Bigl(\tilde b_{s_1} \bigl( h_{0 s_1}
\tilde f_1 ( a^{(1)}_1,
 \ldots, a^{(1)}_{d_1};
 \ldots;  a^{(1)}_1,
 \ldots, a^{(1)}_{d_1}; \bar z_1, \ldots, \bar z_{\hat m_1} ) \bigr) b_{1 s_1}   \cdot \\ \cdot
   \bigl( h_{1 s_1} f_1 (a^{(1)}_1,
 \ldots, a^{(1)}_{d_1};
 \ldots;  a^{(1)}_1,
 \ldots, a^{(1)}_{d_1}; \bar z^{(1)}_1, \ldots, \bar z^{(1)}_{m_1} ) \bigr) \tilde b_{1 s_1} \Bigr)
 \varkappa(1_{B_1}) \biggr) \Biggr) j_2
 \cdot \\ \cdot
    \prod_{i=2}^{r}\Biggl(\gamma_i\biggl(\varkappa\Bigl(b_{is_i} \bigl(h_{i s_i}  f_i (a^{(i)}_1,
 \ldots, a^{(i)}_{d_i};
 \ldots;  a^{(i)}_1,
 \ldots, a^{(i)}_{d_i}; \bar z^{(i)}_1, \ldots, \bar z^{(i)}_{m_i} ) \bigr) \tilde b_{is_i} \Bigr)
  \varkappa(1_{B_i})\biggr)\Biggr) j_{i+1}\ne 0.
 \end{split}
 \end{equation*}   
Moreover $\pi(h\varkappa(a)-\varkappa(ha))=0$ and $\pi(\varkappa(a)\varkappa(b)-\varkappa(ab))=0$ 
imply $h\varkappa(a)-\varkappa(ha) \in J$
and $\varkappa(a)\varkappa(b)-\varkappa(ab) \in J$ for all $a,b\in A$ and $h\in H$.
Hence by~(\ref{EqAssocbazero}) in the entries to the left of $\varkappa(1_{B_i})$
the map $\varkappa$ behaves like a homomorphism of $H$-modules and
\begin{equation*}\begin{split}
 a= j_1 \Biggl(\gamma_1\biggl(\varkappa (\tilde b_{s_1}) \Bigl( h_{0 s_1}
\tilde f_1\bigl( \varkappa (a^{(1)}_1),
 \ldots, \varkappa(a^{(1)}_{d_1});
 \ldots;  \varkappa(a^{(1)}_1),
 \ldots, \varkappa(a^{(1)}_{d_1});
\\ 
  \varkappa(\bar z_1), \ldots, \varkappa(\bar z_{\hat m_1})\bigr) \Bigr)\varkappa(b_{1 s_1}) \cdot \\ \cdot
   \Bigl( h_{1 s_1} f_1\bigl(\varkappa(a^{(1)}_1),
 \ldots, \varkappa (a^{(1)}_{d_1});
 \ldots;  \varkappa (a^{(1)}_1),
 \ldots, \varkappa (a^{(1)}_{d_1}); 
\\  
 \varkappa(\bar z^{(1)}_1), \ldots,  \varkappa(\bar z^{(1)}_{m_1})\bigr)\Bigr)  \varkappa(\tilde b_{1 s_1} )
 \varkappa(1_{B_1}) \biggr) \Biggr) j_2
 \cdot \\ \cdot
    \prod_{i=2}^{r}\Biggl(\gamma_i\biggl(\varkappa(b_{is_i}) \Bigl(h_{i s_i}  f_i\bigl(\varkappa(a^{(i)}_1),
 \ldots, \varkappa(a^{(i)}_{d_i});
 \ldots;  \varkappa(a^{(i)}_1),
 \ldots, \varkappa(a^{(i)}_{d_i});
 \\
  \varkappa(\bar z^{(i)}_1 ), \ldots, \varkappa(\bar z^{(i)}_{m_i})\bigr)\Bigr) \varkappa(\tilde b_{is_i}) 
  \varkappa(1_{B_i})\biggr) \Biggr) j_{i+1}\ne 0.
 \end{split}
 \end{equation*}

  Define the multilinear function \begin{equation*}\begin{split}
 f_0 := v_1 \Biggl(\gamma_1\biggl(y_0 \Bigl( h_{0 s_1}
\tilde f_1\bigl( x^{(1)}_1,
 \ldots, x^{(1)}_{d_1};
 \ldots;  x^{(2\tilde k)}_1,
 \ldots, x^{(2\tilde k)}_{d_1};
  z_1, \ldots, z_{\hat m_1} \bigr) \Bigr) y_1 \cdot \\ \cdot
   \Bigl( h_{1 s_1} f_1\bigl(x^{(1,1)}_1,
 \ldots, x^{(1,1)}_{d_1};
 \ldots;  x^{(1,2k)}_1,
 \ldots, x^{(1,2k)}_{d_1}; 
 z^{(1)}_1, \ldots,  z^{(1)}_{m_1} \bigr)\Bigr) w_1  \biggr)\Biggr) v_2 
 \cdot \\ \cdot
   \prod_{i=2}^{r}\Biggl(\gamma_i\biggl(y_i \Bigl(h_{i s_i}  f_i\bigl(x^{(i,1)}_1,
 \ldots, x^{(i, 1)}_{d_i};
 \ldots;  x^{(i, 2k)}_1,
 \ldots, x^{(i, 2k)}_{d_i};
  z^{(i)}_1, \ldots, z^{(i)}_{m_i}\bigr)\Bigr) w_i\biggr) \Biggr) v_{i+1}.
 \end{split}
 \end{equation*}   
 (If $q_i=0$ for some $i$ and $j_i$ was absent in~(\ref{EqAssocNonZero}), then the variable $v_i$ does not appear in $f_0$ either.)
The value of $f_0$ under the substitution
$x^{(\alpha)}_{\beta}=\varkappa(a^{(1)}_\beta)$,
    $x^{(i, \alpha)}_{\beta}=\varkappa(a^{(i)}_\beta)$,
    $z_i=\varkappa(\bar z_i)$,
 $z^{(i)}_{\beta}=\varkappa(\bar z^{(i)}_\beta)$, $v_i=j_i$,
 $y_0 = \varkappa(\tilde b_{s_1})$,
 $y_i = \varkappa(b_{is_i})$,
 $w_i = \varkappa(\tilde b_{is_i})\varkappa(1_{B_i})$
  is $a\ne 0$. 
    We denote this substitution by $\Xi$.

Let $X_\ell = \bigcup_{i=1}^r X^{(i,\ell)}$, where $X^{(i,\ell)}=\lbrace x^{(i,\ell)}_\alpha \mid 1\leqslant \alpha \leqslant d_i \rbrace$, and let $\Alt_\ell$
be the operator of alternation on the set $X_\ell$.
   Denote $\hat f := \Alt_1 \Alt_2 \ldots \Alt_{2k} f_0$.
   Note that the alternations do not change $z_i, 
 z^{(i)}_{\beta}, v_i, y_i, w_i$,
   and $f_i$ is alternating on each $X^{(i)}_\ell$.
   Hence the value of $\hat f$ under the substitution $\Xi$
   equals $\left((d_1)! (d_2)! \ldots (d_{r})!\right)^{2k} a \ne 0$
   since $B_1 \oplus \ldots \oplus B_r$ is a direct sum
   of $H$-invariant ideals and if the alternation puts a variable from
   $X^{(i)}_\ell$ on the place of a variable from $X^{(i')}_\ell$
   for $i \ne i'$, the corresponding $h\varkappa(a^{(i)}_\beta)$, $h\in H$, annihilates elements from $\varkappa(B_{i'})$. Here we have used once again that
   by~(\ref{EqAssocbazero}), in the entries to the left of $\varkappa(1_{B_i})$,
the map $\varkappa$ behaves like a homomorphism of $H$-modules and algebras.
  
   Note that without additional manipulations $\hat f$ is a multilinear function but not an $H$-polynomial.
 However, using~(\ref{EqGeneralizedHopf}), we can represent
 $\hat f$  by an $H$-polynomial 
\begin{equation*}\begin{split}
 \tilde f := \Alt_1 \Alt_2 \ldots \Alt_{2k} v_1 y_0^{\tilde h_0} 
\tilde f'_1\left( x^{(1)}_1,
 \ldots, x^{(1)}_{d_1};
 \ldots;  x^{(2\tilde k)}_1,
 \ldots, x^{(2\tilde k)}_{d_1};
  z_1, \ldots, z_{\hat m_1} \right)  y_1^{\tilde h_1} \cdot \\ \cdot
f'_1\left(x^{(1,1)}_1,
 \ldots, x^{(1,1)}_{d_1};
 \ldots;  x^{(1,2k)}_1,
 \ldots, x^{(1,2k)}_{d_1}; 
 z^{(1)}_1, \ldots,  z^{(1)}_{m_1} \right) w_1^{\hat h_1}   v_2 
 \cdot \\ \cdot
   \prod_{i=2}^{r} y_i^{\tilde h_i} f'_i\left(x^{(i,1)}_1,
 \ldots, x^{(i, 1)}_{d_i};
 \ldots;  x^{(i, 2k)}_1,
 \ldots, x^{(i, 2k)}_{d_i};
  z^{(i)}_1, \ldots, z^{(i)}_{m_i}\right) w_i^{\hat h_i}  v_{i+1}
 \end{split}
 \end{equation*}     
where $f_i'$ and $\tilde f_1'$ are some $H$-polynomials,
$\tilde h_i, \hat h_i \in H$ are the elements obtained from $h_{0s_1}$, $h_{is_i}$ and $\gamma_i$ by~(\ref{EqGeneralizedHopf}), and the value of $\tilde f$ under
the substitution $\Xi$ again equals $\left((d_1)! (d_2)! \ldots (d_{r})!\right)^{2k} a \ne 0$.

Now we expand $\tilde f'_1$ and notice that $\tilde f$ is a linear combination of multilinear $H$-polynomials \begin{equation*}\begin{split}
 \tilde f_0 := \Alt_1 \Alt_2 \ldots \Alt_{2k} u_1^{\tau_1} \ldots u_s^{\tau_s}  \cdot \\ \cdot
f'_1\left(x^{(1,1)}_1,
 \ldots, x^{(1,1)}_{d_1};
 \ldots;  x^{(1,2k)}_1,
 \ldots, x^{(1,2k)}_{d_1}; 
 z^{(1)}_1, \ldots,  z^{(1)}_{m_1} \right) w_1^{\hat h_1}   v_2 
 \cdot \\ \cdot
   \prod_{i=2}^{r} y_i^{\tilde h_i} f'_i\left(x^{(i,1)}_1,
 \ldots, x^{(i, 1)}_{d_i};
 \ldots;  x^{(i, 2k)}_1,
 \ldots, x^{(i, 2k)}_{d_i};
  z^{(i)}_1, \ldots, z^{(i)}_{m_i}\right) w_i^{\hat h_i}  v_{i+1}
 \end{split}
 \end{equation*}    
 where $u_1, \ldots, u_s$ are the variables $x^{(\alpha)}_\beta$, 
 $y_0$, $y_1$, $z_i$, and possibly $v_1$, and $\tau_i\in H$ are some elements. Here $s=2\tilde k d_1 + \hat m_1+3$ if $v_1$ was actually present in $f_0$ and $s=2\tilde k d_1 + \hat m_1+2$ either. At least one of $\tilde f_0$ is not a polynomial
 $H$-identity. Denote it again by $\tilde f_0$.
 Note that 
 $$\deg \tilde f_0 \geqslant 2\tilde k d_1 + \hat m_1+1
 + \sum_{i=1}^r (2kd_i+m_i+2)>  2\tilde k d_1 + 2 k d > n.$$
 On the other hand, $\sum_{i=1}^r (2kd_i+m_i+3)-1 \leqslant n$.
 Let \begin{equation*}\begin{split}
 f := \Alt_1 \Alt_2 \ldots \Alt_{2k} u_{(\deg \tilde f_0)-n+1 }^{\tau_{(\deg \tilde f_0)-n+1}} \ldots u_s^{\tau_s}  \cdot \\ \cdot
f'_1\left(x^{(1,1)}_1,
 \ldots, x^{(1,1)}_{d_1};
 \ldots;  x^{(1,2k)}_1,
 \ldots, x^{(1,2k)}_{d_1}; 
 z^{(1)}_1, \ldots,  z^{(1)}_{m_1} \right) w_1^{\hat h_1}   v_2 
 \cdot \\ \cdot
   \prod_{i=2}^{r} y_i^{\tilde h_i} f'_i\left(x^{(i,1)}_1,
 \ldots, x^{(i, 1)}_{d_i};
 \ldots;  x^{(i, 2k)}_1,
 \ldots, x^{(i, 2k)}_{d_i};
  z^{(i)}_1, \ldots, z^{(i)}_{m_i}\right) w_i^{\hat h_i}  v_{i+1}.
 \end{split}
 \end{equation*}    
 Then $f$ does not vanish under the substitution $\Xi$. In addition,
 $f$ is alternating in $X_\ell$, $1 \leqslant \ell \leqslant 2k$.
 Moreover $\deg f = n$. Now we rename the variables of $f$ into $x_1, \ldots, x_n$
 and notice that $f$ satisfies all the conditions of the lemma.
 \end{proof}
\begin{proof}[Proof of Theorem~\ref{TheoremHmodHRadAmitsurPIexpHBdimB}]
As we have already mentioned, the upper bound was proved in~\cite[Lemma 2]{ASGordienko8}. 
 In order to prove the lower bound, we repeat verbatim the proofs of~\cite[Lemma~11 and Theorem~5]{ASGordienko3} using~\cite[Lemma~1]{ASGordienko8} and Lemma~\ref{LemmaAssocLowerPolynomial} instead of~\cite[Lemma~7 and Lemma~10]{ASGordienko3}. 
\end{proof}

\end{document}